\documentclass[]{amsart}


\usepackage{subfigure}
\usepackage[justification=centering]{caption}
\usepackage[utf8]{inputenc}
\usepackage[toc]{appendix}

\usepackage{amssymb}
\usepackage{amsthm}
\usepackage{amsmath,a4wide}
\usepackage{amsbsy}
\usepackage[all]{xy}
\usepackage{bm}
\usepackage{hyperref}
\usepackage{amsthm}
\usepackage{marvosym}
\usepackage{verbatim} 
\usepackage{graphicx}
\usepackage{enumerate}
\usepackage{graphicx}
\usepackage{tikz,pgfplots}
\usetikzlibrary{shapes}
\usetikzlibrary{intersections}
\usepackage{pdfsync}
\usepackage{caption}
\usepackage{hyperref}
\usepackage{enumerate}

\theoremstyle{plain}
\newtheorem{theorem}{Theorem}[section]
\theoremstyle{plain}

\theoremstyle{plain}
\newtheorem{lemma}[theorem]{Lemma}
\newtheorem*{lem}{Lemma}
\theoremstyle{plain}

\theoremstyle{definition}

\theoremstyle{remark}

\theoremstyle{remark}

\theoremstyle{definition}

\theoremstyle{definition}

\theoremstyle{definition}

\providecommand{\norm}[1]{\lVert#1\rVert}

\newcommand{\R}{\mathbb{R}}

\newcommand{\Rd}{\mathbb{R}^d}
\newcommand{\Z}{\mathbb{Z}}
\newcommand{\N}{\mathbb{N}}
\newcommand{\Lt}[1][d]{L^2(\R^{#1})}
\newcommand{\G}{\mathcal{G}}
\newcommand{\F}{\mathcal{F}}

\renewcommand{\l}{\lambda}
\renewcommand{\L}{\Lambda}

\setlength\parindent{0pt}

\begin{document}

\title[Optimal Frame Bounds and Jacobi Theta Functions]{Optimal Gabor Frame Bounds for Separable Lattices and Estimates for Jacobi Theta Functions}

\author[M. Faulhuber]{Markus Faulhuber}
\address{NuHAG, Faculty of Mathematics, University of Vienna, Oskar-Morgenstern-Platz 1, 1090 Vienna, Austria}
\email{markus.faulhuber@univie.ac.at}

\author[S. Steinerberger]{Stefan Steinerberger}
\address{Department of Mathematics, Yale University, 10 Hillhouse Avenue, New Haven, CT 06510, USA}
\email{stefan.steinerberger@yale.edu}

\begin{abstract}
	We study sharp frame bounds of Gabor frames for integer redundancy with the standard Gaussian window and prove that the square lattice optimizes both the lower and the upper frame bound among all rectangular lattices. This proves a conjecture of Floch, Alard \& Berrou (as reformulated by Strohmer \& Beaver). The proof is based on refined log-convexity/concavity estimates for the Jacobi theta functions $\theta_3$ and $\theta_4$.
\end{abstract}

\subjclass[2010]{33E05, 42C15}
\keywords{Gabor frame, frame bounds, Jacobi theta functions, log-convexity, log-concavity.}

\maketitle

\section{Introduction}\label{sec_Intro}

\subsection{Introduction.} The study of Gabor frames originates in a 1946 paper of Gabor \cite{Gab46} in which he describes intermediate cases between pure time analysis and pure frequency analysis (Fourier analysis). He proposes to have a two-dimensional representation of a one-dimensional signal (function) which simultaneously uses information of the distribution of the signal in time and its frequencies. 
A Gabor system (or Weyl-Heisenberg system) for $\Lt$ is generated by a (fixed, non-zero) window function $g \in \Lt$ and an index set $\L \subset \R^{2d}$ and is denoted by $\G(g,\L)$. It consists of time-frequency shifted versions of $g$. We say $\l = (x, \omega) \in \Rd \times \Rd$ is a point in the time-frequency plane and use the following notation for a time-frequency shift by $\l$
$$
	\pi (\l)g(t) = M_\omega T_x \, g(t) = e^{2 \pi i \omega \cdot t} g(t-x), \quad x,\omega,t \in \Rd.
$$
Hence, for a window function $g$ and an index set $\L$ the Gabor system is
$$
	\G(g,\L) = \lbrace \pi(\l) g \, | \, \l \in \L \rbrace.
$$
The time-frequency shifted versions of the window $g$ are called atoms. In order to be a frame, $\G(g,\L)$ has to satisfy the frame inequality
$$
	A \norm{f}_2^2 \leq \sum_{\l \in \L} \left| \langle f, \pi(\l) g \rangle \right|^2 \leq B \norm{f}_2^2, \quad \forall f \in \Lt
$$
for some positive constants $0 < A \leq B < \infty$ called frame constants or frame bounds. Whenever we speak of frame bounds, we only consider the optimal frame bounds. The frame bounds serve as quantitative measurement of how close the frame is to a tight frame, in which case we would have $A=B$. If the frame gives rise to an orthonormal basis, we have $A=B=1$. The index set $\L \subset \R^{2d}$ is called a lattice if it is generated by an invertible (non-unique) $2d \times 2d$ matrix $S$, in the sense that $\L = S \Z^{2d}$. The volume of the lattice, which is unique, is defined as
$$
	\mbox{vol}(\L) = |\det(S)| \qquad \mbox{while its density or redundancy is given by} \qquad \delta(\L) = \frac{1}{\mbox{vol}(\L)}.
$$
A lattice is called separable if the generating matrix can take the form
$$
	S = \left(
	\begin{array}{c c}
		\alpha I & 0\\
		0 & \beta I
	\end{array}
	\right).
$$

For more details on frames, Gabor frames and time-frequency analysis we refer to the classical texts \cite{Chr03}, \cite{Fei81}, \cite{FeiGro97}, \cite{FeiLue15}, \cite{Gro01}, \cite{Hei06}. One of the fundamental questions in Gabor analysis is to understand when a Gabor system $\G(g,\L)$ forms a frame. For a fixed window $g$ the family of all lattices $\L$ which together with $g$ generate a frame is called the frame set of the window $g$. We distinguish between the full frame set whose elements are lattices in general and the reduced frame set whose elements are the lattice parameters of separable lattices \cite{Gro14}. For a window function  $g \in \Lt$ we denote the full frame set by
$$
		\F_{full}(g) = \lbrace \Lambda \subset \R^{2d} \, ~~\text{lattice } | ~ \, \G(g,\L) \text{ is a frame} \rbrace
$$
and the reduced frame set by
$$
	\F_{(\alpha, \beta)}(g) = \lbrace (\alpha,\beta) \in \R_+ \times \R_+ \, |~ \, \G(g,\alpha \Z^d \times \beta \Z^d) \text{ is a frame} \rbrace.
$$

Clearly, $(\alpha,\beta) \in \F_{(\alpha,\beta)}(g)$ implies $\alpha \Z^d \times \beta \Z^d \in \F_{full}(g)$. We may rephrase the question about when a Gabor system forms a frame in the following way. For any given $g$ what is its (full or reduced) frame set? At this point we want to emphasize that there is no general idea of how to determine the frame set of a class of functions or even a single function. Even less is known about how the frame bounds change within the frame set.
The 1-dimensional standard Gaussian window $g_0(t) = 2^{1/4} e^{-\pi t^2}$ has been fully analyzed: results of Lyubarskii \cite{Lyu92} and Seip \cite{Sei92} give the full frame set for Gabor frames with a Gaussian window $g$ as
$$
	\F_{full}(g) = \lbrace \Lambda \subset \R^2 \, | \, \text{vol}(\Lambda) < 1 \rbrace.
$$
This implies that the reduced frame set is given by
$$
	\F_{(\alpha,\beta)}(g) = \lbrace (\alpha, \beta) \in \R_+ \times \R_+ \, | \, \alpha \beta < 1 \rbrace.
$$
However, it is still not clear how to find the lattice that optimizes the frame bounds.

\section{Statement of results}
\subsection{Main result.} Given a Gaussian window function, which lattice $\Lambda \subset \mathbb{R}^2$ minimizes $B/A$?  Floch, Alard \& Berrou \cite{FloAlaBer95} conjectured in 1995 that the square lattice yields the optimal configuration. This was disproved in 2003 by Strohmer \& Beaver \cite{StrBea03} who conjecture that $B/A$ is minimized for the hexagonal lattice among all lattices of fixed redundancy (in 2012 Abreu \& D\"orfler \cite{AbrDoe12} also conjectured that this should be true for any redundancy greater than 1). Strohmer \& Beaver claim that it is `plausible' to assume that the square lattice optimizes the frame bounds among all \textit{rectangular} lattices. For integer redundancy we prove this to be the case by showing an even stronger result.

\begin{theorem}[Main result]\label{main}
	Consider the window function $g_0(t) = 2^{1/4} e^{-\pi t^2}$. Among all separable lattices with $(\alpha \beta)^{-1} \in \mathbb{N}$ fixed, the square lattice maximizes $A$ and minimizes $B$.
\end{theorem}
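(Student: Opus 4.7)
The starting point is the well-known fact that for the Gaussian window $g_0$ and a separable lattice $\alpha \Z \times \beta\Z$ of integer oversampling $n = (\alpha\beta)^{-1}$, the Gabor frame operator can be diagonalized (via the Zak transform, or equivalently via Janssen's representation on the adjoint lattice $\Lambda^\circ = \beta^{-1}\Z \times \alpha^{-1}\Z$). The first step of the proof is to write down this diagonalization explicitly. Because $\alpha\beta \in \Q$ with integer numerator one, the symbol $F(x,\omega)$ of the frame operator is a function on a compact fundamental domain, and the sharp frame bounds are
\[
A(\alpha,\beta) \;=\; \min_{(x,\omega)} F(x,\omega), \qquad B(\alpha,\beta) \;=\; \max_{(x,\omega)} F(x,\omega).
\]
For the Gaussian, the Janssen coefficients $\langle g_0,\pi(k/\beta,\ell/\alpha)g_0\rangle = (-1)^{nk\ell}\,e^{-\pi(k^2/\beta^2+\ell^2/\alpha^2)/2}$ are a tensor product in the two variables, so I expect $F$ to factor as $F(x,\omega)=F_1(x;\alpha)\,F_2(\omega;\beta)$, with each factor an explicit (real-analytic, one-periodic) combination of Jacobi theta values.

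\textbf{Reduction to theta functions.} The next step is to read off $F_1^{\min}, F_1^{\max}$ and $F_2^{\min}, F_2^{\max}$. Since each factor is of the form $\sum_k (-1)^{?}\, e^{-\pi k^2 t^2}\cos(2\pi k \cdot)$, it is monotone on half a period, and its extrema are attained at endpoints corresponding to values of $\theta_3(0;i t^2)$ and $\theta_4(0;i t^2)$ (where $t = \alpha$ or $t = \beta$, possibly rescaled). After tracking constants,
\[
A(\alpha,\beta)\;=\;\Phi_A(\alpha)\,\Phi_A(\beta),\qquad B(\alpha,\beta)\;=\;\Phi_B(\alpha)\,\Phi_B(\beta),
\]
where $\Phi_A,\Phi_B$ are simple expressions in $\theta_3$ and $\theta_4$ (with $\Phi_A$ built from the smaller $\theta_4$-type value and $\Phi_B$ from the larger $\theta_3$-type value). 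The separable tensor structure of the Gaussian is what makes this clean, and is the reason only the rectangular (as opposed to general) case is being treated.

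\textbf{Log--convexity/concavity and the variational step.} With this factorization, Theorem~\ref{main} becomes the statement that, along the hyperbola $\alpha\beta=1/n$, the product $\Phi_A(\alpha)\Phi_A(\beta)$ is maximized and $\Phi_B(\alpha)\Phi_B(\beta)$ is minimized at $\alpha=\beta$. Parametrize by $s=\log\alpha$, so $\log\beta=-\log n-s$, and set $h_A(s):=\log\Phi_A(e^s)$, $h_B(s):=\log\Phi_B(e^s)$. Then $\log A = h_A(s)+h_A(-\log n-s)$ is symmetric about $s=-\tfrac{1}{2}\log n$, and similarly for $\log B$. If I can prove that $h_A$ is concave and $h_B$ is convex in $s$, the symmetric critical point is automatically the global extremum of the required type, which gives the theorem. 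These are exactly the ``log-concavity of $\theta_4$'' and ``log-convexity of $\theta_3$'' statements (when viewed as functions of $\log t$) that the abstract advertises.

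\textbf{Main obstacle.} The classical log-convexity of $t\mapsto\theta_3(0;it^2)$ (e.g.\ Montgomery, or via Jacobi's imaginary transformation) is not directly strong enough: I need convexity/concavity in the logarithmic variable $s=\log t$, which involves an extra $t\partial_t$ and brings in the term $t\,\theta_3'/\theta_3$. The heart of the proof will therefore be an inequality of the form
\[
\bigl(t\,\partial_t\bigr)^2 \log\theta_3(0;it^2)\;\ge\;0,\qquad \bigl(t\,\partial_t\bigr)^2 \log\theta_4(0;it^2)\;\le\;0,
\]
plus the analogous inequalities for whatever scaled arguments appear in $\Phi_A,\Phi_B$. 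I would try to establish these by expanding the logarithmic derivative via the lattice sum / Lambert series representations of $\theta_3,\theta_4$ and, if necessary, by combining two copies of the inequality at $t$ and $1/(nt)$ so that the Jacobi modular identity can be exploited to symmetrize the estimate. This is where the delicate sign analysis lives, and it is the step I expect to be the main technical obstacle; everything preceding it is essentially bookkeeping of known Zak-transform formulas.
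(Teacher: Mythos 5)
Your outline follows the paper's route for even redundancy quite closely: Janssen's representation gives the symbol $F$, the frame bounds are its extrema, and the reduction to showing that $s\,\theta'(s)/\theta(s)$ is monotone (equivalently, convexity/concavity of $\log\theta$ in the variable $\log s$, which is exactly your $(t\partial_t)^2$ formulation) is the same ``useful lemma'' the paper uses to exploit the symmetry $s\mapsto 1/s$ along the hyperbola $\alpha\beta = 1/n$. You have also correctly located the technical heart of the matter in the two refined log-convexity/concavity inequalities for $\theta_3$ and $\theta_4$, and your suggested tools (Lambert-series expansion of the logarithmic derivative, i.e.\ the triple product, and the Jacobi modular identity to transfer the estimate between $t$ and $1/t$) are in fact the ones the paper uses.

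There is, however, a genuine gap in your reduction: the tensor factorization $F(x,\omega)=F_1(x;\alpha)F_2(\omega;\beta)$ fails for odd $n$. The Janssen coefficients carry the sign $(-1)^{nk\ell}$, and while this is identically $1$ for even $n$, for odd $n$ the factor $(-1)^{k\ell}$ cannot be written as $u(k)v(\ell)$ (evaluate at $(0,0)$, $(1,0)$, $(0,1)$, $(1,1)$ to get a contradiction). Consequently the frame bounds in the odd case are not products $\Phi(\alpha)\Phi(\beta)$ but contain an extra cross term: writing $\theta_o(s)=\sum_k e^{-\pi(2k+1)^2 s}$, one gets expressions of the form $\theta_3(rs)\theta_3(r/s)-2\theta_o(rs)\theta_o(r/s)$ and $\theta_4(rs)\theta_4(r/s)-2\theta_o(rs)\theta_o(r/s)$, and your symmetric-critical-point argument does not apply to these differences directly. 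The paper needs two further ideas here: a Poisson summation identity showing $\theta_o(rs)\theta_o(r/s)=\tfrac{1}{4r}\theta_4\bigl(\tfrac{1}{4rs}\bigr)\theta_4\bigl(\tfrac{s}{4r}\bigr)$, which reduces the maximality of the cross term to the already-established $\theta_4$ result and settles the upper bound (since the two terms are then extremal at $s=1$ with the right opposite signs); and, for the lower bound, a quantitative comparison of the derivatives of the two products near $s=1$, since there the two terms compete rather than cooperate. Separately, even in the even case your two key inequalities are only identified, not proved; be aware that the $\theta_3$ case requires proving monotonicity directly only for $s\ge 1$ and then using the identity $s\theta_3'(s)/\theta_3(s)+s^{-1}\theta_3'(1/s)/\theta_3(1/s)=-\tfrac12$ to reach small $s$, while the $\theta_4$ case works termwise in the triple product --- and the termwise argument does not transfer to $\theta_3$.
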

After a preliminary reduction (based on work by Janssen \cite{Jan96}), we need fine estimates on the Jacobi theta function restricted to a vertical strip in the upper half plane with $\Re z = 0$ and $\Re z = 1/2$, respectively. We introduce these functions explicitly as
$$
	\theta_3(s) = \sum_{k=-\infty}^{\infty} e^{-\pi k^2 s} \qquad \mbox{and} \qquad \theta_4(s) = \sum_{k=-\infty}^{\infty} (-1)^k e^{-\pi k^2 s}.
$$
They appear in many different areas:  $\theta_4$, for example, is a rescaling of the c.d.f.\ of the Kolmogorov-Smirnov distribution in probability theory. Theorem \ref{main} naturally splits into 4 separate statements
$$
	\left\{(\alpha \beta)^{-1}~\mbox{is even}, ~(\alpha \beta)^{-1} ~\mbox{is odd}\right\}  \times \left\{\mbox{maximization of $A$, minimization of $B$}\right\}
$$
and we will prove each statement separately. Tolimieri \& Orr \cite{TolOrr95} mention that one of the four cases, the square lattice minimizing the upper frame bound for even redundancy, has been shown by Janssen (unpublished). Somewhat to our surprise, the four proofs require four different ideas and are very different in style -- only one proof is straightforward. A common theme is that $\theta_3(s)$ and $\theta_4(s)$ are easy to understand for large values of $s$ but exhibit more complicated behavior for $s$ small (because many different terms contribute to the sum).

\subsection{Even redundancy.} We start with $(\alpha \beta)^{-1} \in 2 \mathbb{N}$. In that special case, the two desired statements follow from the following two inequalities, respectively.

\begin{theorem}\label{alternativ}
	For all $r,s > 0$,
	$$
		\theta_3(rs) \theta_3\left(\frac{r}{s}\right) \geq \theta_3(r)^2  \quad \mbox{and} \quad
		\theta_4(rs) \theta_4\left(\frac{r}{s}\right) \leq \theta_4(r)^2
	$$
	with equality only for $s=1$.
\end{theorem}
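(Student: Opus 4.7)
The plan is to prove both inequalities by establishing that $t \mapsto \log \theta_3(e^t)$ is convex on $\R$ and $t \mapsto \log \theta_4(e^t)$ is concave on $\R$. Writing $u = \log s$, the midpoint convexity of $\log\theta_3(e^t)$ at $t_0 = \log r$ reads $\log \theta_3(re^u) + \log \theta_3(re^{-u}) \geq 2\log\theta_3(r)$, which is exactly the first asserted inequality, and the $\theta_4$ statement follows analogously from concavity. Strict midpoint convexity/concavity then delivers the equality case $s=1$.

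For the $\theta_4$ part I would use the Jacobi triple product
$$
\theta_4(v) = \prod_{n \geq 1}(1 - e^{-2\pi n v})(1 - e^{-(2n-1)\pi v})^2,
$$
which rewrites $\log \theta_4(v) = \sum_{m \geq 1} c_m \log(1 - e^{-\pi m v})$ with $c_m = 2$ for $m$ odd and $c_m = 1$ for $m$ even. A direct calculation gives
$$
\frac{d^2}{dt^2} \log(1 - e^{-c e^t}) = x\, g'(x) \text{ at } x = c e^t, \quad g(x) := \frac{x}{e^x - 1},
$$
and $g'(x) < 0$ for $x > 0$ (since $e^x(1-x) - 1$ vanishes at $0$ with derivative $-xe^x$). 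Every summand is thus strictly concave in $t$, and so is the sum.

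For $\theta_3$ the corresponding product has factors $(1 + e^{-(2n-1)\pi v})$ that are not log-concave in $\log v$, so I would route through $\theta_4$ via the identity
$$
\theta_3(v)\theta_4(v) = \theta_4(2v)^2,
$$
provable by substituting $m = j+k$, $n = j-k$ in $\sum_{j,k}(-1)^k e^{-\pi(j^2+k^2)v}$; the both-odd sector cancels by the pairing $p \leftrightarrow -p-1$. Setting $A(t) := \log\theta_4(e^t)$, this yields $\log\theta_3(e^t) = 2A(t + \log 2) - A(t)$, whence
$$
(\log\theta_3(e^t))'' = \sum_{m \geq 1} c_m\bigl[ 2 h(2\pi m e^t) - h(\pi m e^t)\bigr], \qquad h(x) := x g'(x) < 0.
$$
The modular identity $\theta_3(v) = v^{-1/2}\theta_3(1/v)$ makes this second derivative an even function of $t$, so it suffices to verify non-negativity when $v = e^t \geq 1$; then every $x = \pi m v$ satisfies $x \geq \pi$, and, since $h < 0$, term-by-term non-negativity reduces to the pointwise estimate
$$
|g'(x)| \geq 4\,|g'(2x)| \qquad \text{for } x \geq \pi.
$$

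This last inequality is the main analytic obstacle. I would prove it by elementary bounds on $|g'(x)| = ((x-1)e^x + 1)/(e^x-1)^2$: the lower bound $|g'(x)| \geq (x-1)e^{-x}$ (for $x \geq 1$) and the upper bound $|g'(2x)| \leq 2x\, e^{-2x}/(1 - e^{-2\pi})^2$ (for $x \geq \pi$) combine to give
$$
\frac{|g'(x)|}{|g'(2x)|} \geq \frac{(x - 1)(1 - e^{-2\pi})^2\, e^x}{2x} \geq \frac{(\pi - 1)(1 - e^{-2\pi})^2\, e^\pi}{2\pi} \approx 7.86,
$$
comfortably above $4$, since the lower bound is increasing in $x$. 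Strictness throughout then delivers strict convexity of $\log \theta_3(e^t)$ and strict concavity of $\log\theta_4(e^t)$, giving equality in the two inequalities only at $s = 1$.
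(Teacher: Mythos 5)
Your proposal is correct, and while the $\theta_4$ half coincides with the paper's argument, the $\theta_3$ half is a genuinely different proof. Both you and the paper reduce the theorem to strict convexity of $\log\theta_3(e^t)$ and strict concavity of $\log\theta_4(e^t)$ (in the paper this appears as strict monotonicity of $s\theta_3'(s)/\theta_3(s)$ and $s\theta_4'(s)/\theta_4(s)$, which is the same statement), and for $\theta_4$ you both use the Jacobi triple product and the termwise observation that $x \mapsto x/(e^x-1)$ is decreasing. For $\theta_3$, however, the paper proves monotonicity of $s\theta_3'(s)/\theta_3(s)$ on $s \geq 1$ by direct term-by-term series estimates (bounding tails by geometric series, with a final numerical verification valid for $s \geq 0.7$), and then transfers the result to $0<s<1$ via the identity $s\theta_3'(s)/\theta_3(s) + s^{-1}\theta_3'(1/s)/\theta_3(1/s) = -1/2$, which is just the derivative of the same modular relation $\theta_3(1/v) = \sqrt{v}\,\theta_3(v)$ you use to establish evenness of $(\log\theta_3(e^t))''$. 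Your route through the Landen-type identity $\theta_3(v)\theta_4(v)=\theta_4(2v)^2$ instead recycles the already-computed termwise second derivatives of $\log\theta_4(e^t)$ and reduces everything to the single one-variable inequality $|g'(x)| \geq 4|g'(2x)|$ for $x \geq \pi$, which you verify with a comfortable margin ($\approx 7.86$ versus the required $4$); all the supporting computations check out (the identity itself, the cancellation of the odd-odd sector, the evenness reduction, and the elementary bounds on $g'$). What your approach buys is a cleaner endgame for the hard half of the theorem: a single explicit pointwise inequality with slack replaces the paper's more delicate series manipulations; the cost is the extra input of the $\theta_3\theta_4=\theta_4(2\cdot)^2$ identity. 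The only points you should make explicit in a final write-up are the routine justification of twice differentiating the infinite sums termwise (immediate from exponential decay) and the observation that strictness of the termwise inequalities yields strict convexity/concavity, hence the equality case $s=1$.
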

Our proofs of these inequalities are non-trivial: the first one requires a new identity for $\theta_3$ while the other one uses an algebraic monotonicity property arising when interpreting $\theta_4$ as an infinite product. Our approach uses the notions of log-concavity and log-convexity. It is very easy to show $\log{\theta_3(s)}$ is a convex function (see Lemma \ref{lem_theta3_log}): indeed, more generally, the Cauchy-Schwarz inequality immediately implies for $a_k, b_k \geq 0$ and
$$
	f(s) = \sum_{k=1}^{\infty}{a_k e^{- b_k s}} \qquad \mbox{that} \quad \left(\log{(f)}\right)'' \geq 0.
$$
We are not aware of a similarly easy way to establish log-concavity of $\theta_4(s)$ and could not find the result in the literature. Recall that log-concavity of a function $f(s)$ can be equivalently written as
$$
	f''(s)f(s) - f'(s)^2 \leq 0
$$
while log-convexity can be equivalently written as
$$
	f''(s)f(s) - f'(s)^2 \geq 0.
$$
We require quantitatively stronger results which will then imply Theorem \ref{alternativ}. There has been some recent work on refined estimates for $\theta-$functions (e.g.\ \cite{dixit, schief, soly}) and we believe that our inequalities could be of independent interest.

\begin{theorem}[Refined logarithmic convexity for $\theta_3$]\label{log_convex}
	We have, for $s > 0$,
$$
	\theta_3''(s)\theta_3(s) - \theta_3'(s)^2 > -\frac{\theta_3'(s) \theta_3(s)}{s} > 0.
$$
\end{theorem}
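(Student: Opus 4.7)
The right inequality in the statement is immediate: since $\theta_3(s)>0$, $\theta_3'(s)=-\pi\sum_{k\in\Z}k^2 e^{-\pi k^2 s}<0$, and $s>0$, the quantity $-\theta_3'(s)\theta_3(s)/s$ is positive. For the left inequality, my plan is to recast it as strict monotonicity of the auxiliary function $L(s):=s\theta_3'(s)/\theta_3(s)=s(\log\theta_3)'(s)$. Differentiating gives
\[
s\,\theta_3(s)^2\,L'(s)=s\bigl[\theta_3''(s)\theta_3(s)-\theta_3'(s)^2\bigr]+\theta_3'(s)\theta_3(s),
\]
so the claim is exactly $L'(s)>0$ on $(0,\infty)$.

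The first key step is to exploit the Jacobi modular identity $\theta_3(s)=s^{-1/2}\theta_3(1/s)$. Taking logarithms and differentiating gives the clean functional equation
\[
L(s)+L(1/s)=-\tfrac{1}{2}\qquad(s>0),
\]
and one more differentiation yields $L'(1/s)=s^2 L'(s)$. Since $s^2>0$, the signs of $L'(s)$ and $L'(1/s)$ coincide, so it suffices to verify $L'(s)>0$ on the reduced range $s\geq 1$.

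The next step is to unfold $L'(s)$ into a double lattice sum on $\Z^2$. Multiplying out the series for $\theta_3$ and its derivatives and symmetrizing in $(k,j)\leftrightarrow(j,k)$ gives
\[
s\,\theta_3(s)^2 L'(s)=\tfrac{\pi}{2}\sum_{(k,j)\in\Z^2}\bigl[\pi s(k^2-j^2)^2-(k^2+j^2)\bigr]e^{-\pi s(k^2+j^2)}.
\]
The crucial elementary fact is that $(k^2-j^2)^2\geq k^2+j^2$ whenever $|k|\neq|j|$ with $(k,j)\neq(0,0)$: setting $a=\max\{|k|,|j|\}$, $b=\min\{|k|,|j|\}$ gives $a^2-b^2\geq a+b$, so $(a^2-b^2)^2\geq(a+b)^2\geq a^2+b^2$. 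Consequently, for $s\geq 1/\pi$ every summand off the diagonal $|k|=|j|$ is non-negative, and the only negative contribution is $-8m^2 e^{-2\pi s m^2}$ per $m\geq 1$ from the points $(\pm m,\pm m)$.

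For $s\geq 1$, the four axis summands $(\pm 1,0),(0,\pm 1)$ alone produce $4(\pi s-1)e^{-\pi s}$, which comfortably dominates the geometric tail $8\sum_{m\geq 1}m^2 e^{-2\pi s m^2}$ thanks to the factor $e^{-\pi s}\leq e^{-\pi}$. The hard part — and the reason the modular reduction is indispensable — is that the inequality degenerates to equality in the continuous Gaussian limit $s\to 0^+$ (and, by the symmetry $L(s)+L(1/s)=-1/2$, also as $s\to\infty$), so any termwise estimate uniform over $(0,\infty)$ would leave no slack. Restricting to $s\geq 1$ provides precisely the exponential decay $e^{-\pi s}\leq e^{-\pi}$ needed to make the leading positive term dominate.
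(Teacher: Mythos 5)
Your proof is correct, and its overall architecture coincides with the paper's: both reduce the left inequality to the strict monotonicity of $L(s)=s\theta_3'(s)/\theta_3(s)$, both derive the functional equation $L(s)+L(1/s)=-1/2$ from the Jacobi identity $\theta_3(1/s)=\sqrt{s}\,\theta_3(s)$ to transfer the problem from $(0,1]$ to $[1,\infty)$, and both handle the right inequality by the trivial sign observation. Where you genuinely diverge is in the key estimate on $[1,\infty)$. The paper bounds $\theta_3(s)\geq 1$ to reduce to $s\theta_3''+\theta_3'\theta_3-s(\theta_3')^2>0$ and then majorizes each series by geometric series, asserting validity for $s\geq 0.7$ without displaying the final numerics. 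You instead symmetrize the product of series into the single lattice sum $\tfrac{\pi}{2}\sum_{(k,j)\in\Z^2}\bigl[\pi s(k^2-j^2)^2-(k^2+j^2)\bigr]e^{-\pi s(k^2+j^2)}$ and observe that the arithmetic inequality $(k^2-j^2)^2\geq k^2+j^2$ (valid off the diagonal $|k|=|j|$) makes every off-diagonal term non-negative once $s\geq 1/\pi$, isolating the negative contributions to the diagonal points $(\pm m,\pm m)$, which are then crushed by the axis terms since $e^{-2\pi s}\ll e^{-\pi s}$ for $s\geq 1$. This is cleaner and more transparent than the paper's term-by-term majorization: it identifies exactly which lattice points fight against the inequality, and the factor-of-roughly-$25$ margin at $s=1$ makes the final comparison genuinely routine. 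Your closing remark that the inequality degenerates in the limits $s\to 0^+$ and $s\to\infty$ correctly explains why the modular reduction is not merely convenient but necessary. One small slip: in your two displayed identities the left-hand side should read $\theta_3(s)^2L'(s)$, not $s\,\theta_3(s)^2L'(s)$ (the product rule gives $\theta_3^2L'=\theta_3'\theta_3+s[\theta_3''\theta_3-(\theta_3')^2]$); since $s>0$ this extraneous factor does not affect any sign conclusion, but it should be corrected.
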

The proof is quite curious: $\theta_3(s)$ is complicated to evaluate if $s$ is close to the origin (because many different terms start to contribute to the sum) but is quite simple on $\left\{s \in \mathbb{R}: s \geq 1\right\}$ 
because it is essentially dominated by its leading term. We first establish the relevant inequality in $\left\{s \in \mathbb{R}: s \geq 1\right\}$  and then use the new identity (which follows 
from the Jacobi identity)
$$
	s \frac{\theta_3'(s)}{\theta_3(s)} + \frac{1}{s}\frac{\theta_3'\left(\frac{1}{s}\right)}{\theta_3\left(\frac{1}{s}\right)} = -\frac{1}{2} \qquad \quad \mbox{for}~s > 0
$$
to backpropagate the information to the origin.

\begin{theorem}[Refined logarithmic concavity for $\theta_4$]\label{log_concav}
	We have, for $s > 0$,
	$$
		\theta_4''(s)\theta_4(s) - \theta_4'(s)^2 < -\frac{\theta_4'(s) \theta_4(s)}{s} < 0.
	$$
\end{theorem}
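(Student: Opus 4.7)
The plan is to exploit the Jacobi triple product expansion
$$
\theta_4(s) = \prod_{n=1}^\infty \bigl(1-e^{-2\pi n s}\bigr)\bigl(1-e^{-(2n-1)\pi s}\bigr)^2, \qquad s > 0,
$$
which provides the announced ``interpretation of $\theta_4$ as an infinite product''. Every factor lies in $(0,1)$, so $\theta_4(s) > 0$ at once, and termwise logarithmic differentiation gives
$$
\frac{\theta_4'(s)}{\theta_4(s)} = \sum_{n \geq 1} \frac{2\pi n}{e^{2\pi n s}-1} + 2\sum_{n \geq 1} \frac{(2n-1)\pi}{e^{(2n-1)\pi s}-1} > 0,
$$
so $\theta_4'(s)>0$ on $(0,\infty)$. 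This immediately disposes of the right-hand inequality $-\theta_4'(s)\theta_4(s)/s<0$.

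For the left inequality I would divide through by $\theta_4(s)^2>0$ and, writing $L=\log\theta_4$, recast the claim as
$$
L''(s) + \frac{L'(s)}{s} < 0, \qquad \text{equivalently,} \qquad \bigl(sL'(s)\bigr)' < 0 \text{ on } (0,\infty).
$$
The key observation is that this differential inequality already holds \emph{term by term} in the log-product expansion of $L$. For any $\beta>0$, the single summand $\ell(s) = \log(1-e^{-\beta s})$ satisfies, by a short direct calculation,
$$
\ell'(s)+s\ell''(s) = \frac{\beta\,\bigl[(1-u)e^{u}-1\bigr]}{(e^{u}-1)^2}, \qquad u := \beta s.
$$

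The whole proof thus reduces to the elementary algebraic monotonicity property $(1-u)e^{u}<1$ for every $u>0$, which I would obtain from $h(u)=1-(1-u)e^{u}$ by noting $h(0)=0$ and $h'(u)=u\,e^{u}>0$ on $(0,\infty)$. Summing the resulting termwise strict inequality $\ell'(s)+s\ell''(s)<0$ over all factors in the product (legitimate since the series for $L$, $L'$ and $L''$ converge uniformly on compact subsets of $(0,\infty)$) yields $L'(s)+sL''(s)<0$, which is the desired statement. I do not anticipate any genuine obstacle: the argument rests entirely on the product formula together with the single elementary inequality $(1-u)e^{u}<1$. The mildly surprising feature is that the refined bound survives factor-by-factor, so no cancellation between summands is needed to close the estimate.
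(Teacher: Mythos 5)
Your proposal is correct and follows essentially the same route as the paper: both arguments rest on the Jacobi triple product, reduce the claim to the monotonicity of $s\,\theta_4'(s)/\theta_4(s)$ factor by factor, and close with the same elementary inequality $1-u<e^{-u}$ for $u>0$. The only (cosmetic) difference is that you obtain $\theta_4'(s)>0$ directly from the termwise positivity of the logarithmic derivative, whereas the paper deduces it from the monotonicity of $s\,\theta_4'(s)/\theta_4(s)$ together with its vanishing limit at infinity.
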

This inequality is new (however, see Coffey \& Csordas \cite{csordas} for a related function). The proof is based on exploiting a suitable monotonicity property using an infinite product representation of $\theta_4$. Since $\theta_4$ has alternating signs, it is difficult to handle and the proof contains a certain 'magic' element of algebraic simplification. Interestingly, this argument does not work for $\theta_3$ even though there exists an analogous representation of $\theta_3$ as an infinite product. Theorem \ref{log_convex} implies that $s \theta_3'(s)/\theta_3(s)$ is monotonically increasing on $\mathbb{R}^{+}$ while Theorem \ref{log_concav} yields that $s \theta_4'(s)/\theta_4(s)$ is monotonically decreasing. Stronger results seem to be true: $s^2 \theta_4'(s)/\theta_4(s)$ seems to be monotonically decreasing and convex but our arguments cannot show that.

\subsection{Odd redundancy} We will now consider the case  $(\alpha \beta)^{-1}$ being an odd integer. Following again Janssen \cite{Jan96}, we introduce
the function
$$ \theta_{o}(s) = \sum_{k \in \mathbb{Z}}{e^{-\pi(2k+1)^2 s}},$$
which can be understood as $\theta_3$ with summation restricted to the odd integers. The upper frame bound will turn out to be implied by the following statement.
\begin{theorem}[Odd redundancy, upper frame bound]\label{oddupp}
	For $r,s \in \R_+$
	$$
		\theta_3(rs) \theta_3(r/s) - 2 \theta_{o}(r s) \theta_{o}(r/s) \qquad \mbox{is minimal for}~s=1.
	$$
\end{theorem}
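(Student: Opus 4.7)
The plan is to reduce Theorem \ref{oddupp} directly to the two inequalities already packaged in Theorem \ref{alternativ} via an algebraic rewriting of the objective. The essential observation is the parity splitting of $\theta_3$: grouping the even-indexed terms (which after rescaling give $\theta_3(4s)$) and the odd-indexed terms (which give $\theta_o(s)$) yields the two elementary identities
$$
\theta_3(s) + \theta_4(s) = 2\theta_3(4s), \qquad \theta_3(s) - \theta_4(s) = 2\theta_o(s),
$$
so in particular $\theta_o(s) = (\theta_3(s) - \theta_4(s))/2$.

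Using this expression for $\theta_o$, I would substitute into the product $2\theta_o(rs)\theta_o(r/s)$ and combine with $\theta_3(rs)\theta_3(r/s)$. The cross terms $\theta_3(rs)\theta_4(r/s)$ and $\theta_4(rs)\theta_3(r/s)$ reassemble into the product $(\theta_3+\theta_4)(rs)(\theta_3+\theta_4)(r/s)$, and applying the first identity above twice collapses this to a clean form
$$
\theta_3(rs)\theta_3(r/s) - 2\theta_o(rs)\theta_o(r/s) = 2\theta_3(4rs)\theta_3(4r/s) - \theta_4(rs)\theta_4(r/s).
$$

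With the objective in this shape the conclusion is essentially immediate: apply the $\theta_3$ half of Theorem \ref{alternativ} with the parameter $4r$ in place of $r$ to get $\theta_3(4rs)\theta_3(4r/s) \geq \theta_3(4r)^2$, and the $\theta_4$ half directly to get $\theta_4(rs)\theta_4(r/s) \leq \theta_4(r)^2$. The first contributes with a $+$ sign to the rewritten objective and the second with a $-$ sign, so both inequalities line up in the direction of a lower bound and combine to give
$$
\theta_3(rs)\theta_3(r/s) - 2\theta_o(rs)\theta_o(r/s) \geq 2\theta_3(4r)^2 - \theta_4(r)^2,
$$
which is precisely the value of the left-hand side at $s=1$. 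The equality cases in Theorem \ref{alternativ} force strict inequality for $s \neq 1$.

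There is essentially no obstacle once the algebraic rewriting is found; the real content is in Theorem \ref{alternativ}. The conceptual point is that expressing $\theta_o$ through $\theta_3$ and $\theta_4$ decouples the objective into two products each governed by one of the two inequalities, with signs arranged so that both help. No refined log-convexity/concavity beyond what has already been proved enters, and no further theta identities beyond the parity split of $\theta_3$ are required.
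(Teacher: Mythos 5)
Your proof is correct, and it takes a genuinely different route from the paper's. The paper keeps the two products separate: it quotes Theorem \ref{alternativ} for the minimality of $\theta_3(rs)\theta_3(r/s)$ at $s=1$ and then proves (Theorem \ref{omax}) that $\theta_o(rs)\theta_o(r/s)$ is maximal at $s=1$ by Poisson summation, i.e.\ the transformation $\theta_o(x) = \tfrac{1}{2\sqrt{x}}\,\theta_4\left(\tfrac{1}{4x}\right)$, which converts the $\theta_o$-product into $\tfrac{1}{4r}\,\theta_4\left(\tfrac{1}{4rs}\right)\theta_4\left(\tfrac{s}{4r}\right)$ and hands the problem to the $\theta_4$-half of Theorem \ref{alternativ} with parameter $1/(4r)$. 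You instead eliminate $\theta_o$ at the outset through the parity identities $\theta_3(s)+\theta_4(s)=2\theta_3(4s)$ and $\theta_3(s)-\theta_4(s)=2\theta_o(s)$ (both correct); writing $E(x)=\theta_3(4x)$ and $O(x)=\theta_o(x)$, so that $\theta_3=E+O$ and $\theta_4=E-O$, one checks that both sides of your claimed identity
$$
	\theta_3(rs)\theta_3(r/s) - 2\theta_o(rs)\theta_o(r/s) = 2\,\theta_3(4rs)\theta_3(4r/s) - \theta_4(rs)\theta_4(r/s)
$$
equal $E(a)E(b)+E(a)O(b)+O(a)E(b)-O(a)O(b)$ with $a=rs$, $b=r/s$, so the rewriting is exact. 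Both halves of Theorem \ref{alternativ} (the first applied with $4r$ in place of $r$, which is legitimate since that theorem holds for all positive parameters) then push in the same direction, the equality cases transfer, and the bound $2\theta_3(4r)^2-\theta_4(r)^2$ is indeed the value at $s=1$ by the same identity. What each approach buys: yours is purely algebraic, avoids the modular transformation entirely, and packages the conclusion as a sum of two lower bounds with matching signs; the paper's detour through Poisson summation isolates the standalone fact that $\theta_o(rs)\theta_o(r/s)$ is maximal at $s=1$, which it then reuses in the proof of Theorem \ref{oddlow} --- so if your argument were to replace the paper's, that intermediate statement would still have to be established separately for the odd lower-bound case.
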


We know from Theorem \ref{alternativ} that  $ \theta_3(rs) \theta_3(r/s) \geq \theta_3(r)^2$. There is a very helpful algebraic simplification: $ \theta_{o}(r s) \theta_{o}(r/s)$ is
maximal for $s=1$ and once this is proven, it will imply the result. By using Poisson summation the statement about the maximality of $\theta_o(rs) \theta_o(r/s)$ can be traced back to the problem for the $\theta_4$ function in Theorem \ref{alternativ}.

\begin{theorem}[Odd redundancy, lower frame bound]\label{oddlow}
	Let $r \geq 1$, $s \in \R_+$. Then
	$$
		\theta_4(rs) \theta_4\left(\frac{r}{s}\right) - 2 \theta_{o}(r s) \theta_{o}\left( \frac{r}{s} \right) \qquad \mbox{is maximal for}~s=1.
	$$
\end{theorem}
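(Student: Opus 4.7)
My plan is to start with a series-expansion identity that reshapes the left-hand side. Expanding $\theta_4(rs)\theta_4(r/s)$ and $2\theta_o(rs)\theta_o(r/s)$ as double sums over $(m,n)\in\Z^2$ and sorting by the parity of the pair, the coefficient of $e^{-\pi r(sm^2+n^2/s)}$ in the difference works out to $+1$ when both $m$ and $n$ are even and $-1$ otherwise. This gives the clean identity
\[
G(s) := \theta_4(rs)\theta_4(r/s) - 2\theta_o(rs)\theta_o(r/s) \;=\; 2\theta_3(4rs)\theta_3(4r/s) - \theta_3(rs)\theta_3(r/s).
\]
The task becomes to show $G(s)\le G(1)$ for $s>0$ whenever $r\ge 1$; since $G(s)=G(1/s)$ it suffices to establish $dG/dH\le 0$ for $H=\log s>0$.

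Next I would compute $dG/dH$ explicitly. Writing $u=rs$, $v=r/s$ and introducing $L(x) := x\,\theta_3'(x)/\theta_3(x) = x\,(\log\theta_3(x))'$, a direct differentiation gives
\[
\frac{dG}{dH} \;=\; 2\,\theta_3(4u)\theta_3(4v)\,[L(4u)-L(4v)] \;-\; \theta_3(u)\theta_3(v)\,[L(u)-L(v)].
\]
Theorem \ref{log_convex} is equivalent (after dividing by $\theta_3(x)^2$) to $L'(x)>0$, so $L$ is strictly increasing on $(0,\infty)$. Hence for $u>v$ both bracketed differences are strictly positive, and proving $dG/dH\le 0$ reduces to the key inequality
\[
2\,\theta_3(4u)\theta_3(4v)\,[L(4u)-L(4v)] \;\le\; \theta_3(u)\theta_3(v)\,[L(u)-L(v)],
\]
under the side condition $u\ge v>0$ and $uv=r^2\ge 1$.

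The main obstacle is that last inequality, and this is where the hypothesis $r\ge 1$ enters. My strategy would be to write each $L$-difference as an integral, $L(u)-L(v)=\int_v^u L'(t)\,dt$ and $L(4u)-L(4v)=4\int_v^u L'(4t)\,dt$, and reduce matters to a comparison of $L'(4t)$ against $L'(t)$ modulated by the ratio $\theta_3(u)\theta_3(v)/(\theta_3(4u)\theta_3(4v))$. The series representation of $L'$ shows that $L'(x)$ decays roughly like $\pi^2 x\,e^{-\pi x}$ at infinity, so $L'(4t)/L'(t)$ is essentially $4\,e^{-3\pi t}$ for large $t$; this should give the inequality an enormous margin on the range $t\ge 1$, and the constraint $uv\ge 1$ is precisely what is needed to control the complementary range. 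If the pointwise comparison turns out to be too crude, the fallback is to use the identity $\theta_3(4x)=\tfrac12(\theta_3(x)+\theta_4(x))$ together with the reciprocal relation $L(x)+L(1/x)=-\tfrac12$ (stated in the excerpt as part of the argument for Theorem \ref{log_convex}) and the refined log-concavity of $\theta_4$ from Theorem \ref{log_concav}; the hope is that these three ingredients combine to rewrite $dG/dH$ as a sum of manifestly non-positive terms, in the spirit of the ``magic algebraic simplification'' the authors mention for the even-redundancy cases.
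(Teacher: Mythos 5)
Your opening identity is correct and genuinely nice: sorting the double sum by parity does give
\[
\theta_4(rs)\theta_4(r/s) - 2\theta_o(rs)\theta_o(r/s) \;=\; 2\,\theta_3(4rs)\theta_3(4r/s) - \theta_3(rs)\theta_3(r/s),
\]
and your formula for $dG/dH$ in terms of $L(x)=x\theta_3'(x)/\theta_3(x)$ is also correct. But all of the difficulty of the theorem has been pushed into your ``key inequality''
$2\theta_3(4u)\theta_3(4v)[L(4u)-L(4v)] \le \theta_3(u)\theta_3(v)[L(u)-L(v)]$, and the method you propose for it does not work. The pointwise comparison of $L'(4t)$ with $L'(t)$ fails badly for small $t$: differentiating the identity $L(x)+L(1/x)=-\tfrac12$ gives $L'(x)=x^{-2}L'(1/x)$, so $L'(t)\sim 2\pi^2 t^{-3}e^{-\pi/t}$ as $t\to 0^+$ and hence $L'(4t)/L'(t)\sim \tfrac{1}{64}e^{3\pi/(4t)}\to\infty$; the integrand on the left of your comparison \emph{dominates} the one on the right precisely on the lower part of the integration range $[v,u]$. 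Worse, the regime $v\to 0$, $u=r^2/v$ (i.e.\ $s$ large with $r$ near $1$) is not a region of ``enormous margin'' but the critical case: there $2\theta_3(4v)/\theta_3(v)=\theta_3(1/(4v))/\theta_3(1/v)=1+2e^{-\pi/(4v)}+\cdots>1$, both bracketed $L$-differences tend to $\tfrac12$, and the two sides of the key inequality agree to leading order, differing only by competing corrections of size $e^{-\pi/(4v)}$ (numerically, at $u=20$, $v=1/20$ the ratio of the two sides is about $1-10^{-5}$). So the inequality is true but with exponentially vanishing relative margin, and no bound that discards these corrections --- in particular not replacing the theta ratio by $1$ and comparing integrands --- can close it. The fallback via $\theta_3(4x)=\tfrac12(\theta_3(x)+\theta_4(x))$ and Theorems \ref{log_convex} and \ref{log_concav} is only a hope as written; note also that after your reduction the two terms of $G$ pull in \emph{opposite} directions ($\theta_3(4u)\theta_3(4v)$ is minimal at $s=1$ by Theorem \ref{alternativ}, while $-\theta_3(u)\theta_3(v)$ is maximal there), so no soft sign argument is available. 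This is a genuine gap.

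For comparison, the paper does not use your identity at all: it keeps the $\theta_4$ and $\theta_o$ products separate, notes that each is individually maximal at $s=1$ (Theorems \ref{alternativ} and \ref{omax}), uses crude size bounds to confine any maximizer to $1/3\le s\le 3$ (this is where $r\ge 1$ enters), and then argues locally that the $\theta_o$ product is only a second-order correction to the $\theta_4$ product, leaving the final estimates to the reader. Your reduction is a legitimately different and arguably cleaner starting point --- everything becomes a statement about $\theta_3$ and the single monotone function $L$ --- but to complete it you would need to prove the key inequality with the exponentially small corrections tracked explicitly near $v=0$, for instance by transplanting the small-$v$ factors through the Jacobi identity and expanding both sides to first order in $e^{-\pi/(4v)}$, rather than by the integrand comparison you outline.
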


This proof is a straightforward combination of the already achieved results: by the previous analysis, both functions have a global maximum in $s=1$ and this is their only critical point. They are both strictly increasing on $0 < s < 1$ and strictly decreasing for $s > 1$. It is easy to show that the derivative of the first term is much larger than the derivative of the second term. Hence, the difference of the two products takes its maximum for $s=1$.

\subsection{Open problems.}
The problem under consideration concerns the so-called fine structure of Gabor frames, which refers to relations between properties of a fixed window and its corresponding frame set (as opposed to coarse structure referring to general properties of the frame set). Hardly anything is known about the fine structure and Gr\"ochenig even goes so far as to describe it as 'mysterious' \cite{Gro14}. Strohmer \& Beaver \cite{StrBea03} conjecture, based on numerical results, that for lattices of fixed redundancy and Gaussian window the smallest value $B/A$ is achieved for a hexagonal lattice and notice that the value is 'suspiciously close to $\sqrt[3]{2}$'. 

\begin{figure}[ht!]
	\begin{minipage}[l]{.3\textwidth}
		\begin{center}
			\begin{tikzpicture}
				\begin{scope}[%
					every node/.style={anchor=west,
					regular polygon, 
					regular polygon sides=6, thick,
					draw,
					minimum width=1.2cm,
					outer sep=0,
					},
					      transform shape]
					    \node (A) {};
					    \node (B) at (A.corner 1) {};
					    \node (C) at (B.corner 5) {};
					    \node (D) at (A.corner 5) {};
					    \node (E) at (B.corner 1) {};
				\end{scope}
			\end{tikzpicture}
		\end{center}
	\end{minipage} 
	\begin{minipage}[r]{.3\textwidth}
		\begin{center}
			\begin{tikzpicture}
				\draw[thick] (xyz polar cs:angle=30,radius=0.77) circle (0.666cm);
				\draw[thick] (xyz polar cs:angle=150,radius=0.77) circle (0.666cm);
				\draw[thick] (xyz polar cs:angle=270,radius=0.77) circle (0.666cm);
				\draw[thick] (0,0) circle (0.10333cm);
				\draw[thick] (xyz polar cs:angle=30,radius=0.77) circle (0.666cm);
				\draw[thick] (xyz polar cs:angle=150,radius=0.77) circle (0.666cm);
				\draw[thick] (xyz polar cs:angle=90,radius=1.11467) circle (0.322cm);
				\draw[thick] (xyz polar cs:angle=210,radius=1.11467) circle (0.322cm);
				\draw[thick] (xyz polar cs:angle=330,radius=1.11467) circle (0.322cm);
			\end{tikzpicture}
		\end{center}
	\end{minipage} 
	\caption{A hexagonal partition and a decomposition into circles. The centers of the hexagons generate a hexagonal lattice.}\label{fig_partition}
\end{figure}
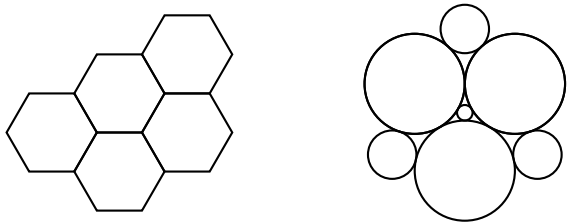

It is not surprising to expect the hexagonal packing to be effective, which arises traditionally whenever one wishes to partition a domain into circle-like domains of equal measure. The most famous result in that direction is certainly Hales' honeycomb theorem \cite{hales} stating that the hexagonal partition minimizes the average perimeter among all partitions into cells of equal measure (see Figure \ref{fig_partition}). There has also been a series of recent problems in mathematical physics (see e.g.\ Caffarelli \& Lin \cite{caff} and the survey of Helffer \& Hoffman-Ostenhof \cite{helffer}) and the calculus of variations \cite{steinerberger} related to the intuitive notion that 'the most circle-like partition of the plane is given by the hexagonal packing'.

We believe that it might be possible to establish connections between partitioning results and Gabor frame bounds with Gaussian window. This approach could provide a uniform lower bound $ B/A \geq 1+c$ for some universal $c>0$ over all lattices of a fixed redundancy and we consider this to be an interesting direction for further research (see also \cite{AbrDoe12}). Furthermore, we conjecture that for fixed redundancy and Gaussian window, the hexagonal lattice simultaneously maximizes the lower frame bound and minimizes the upper frame bound. It seems natural to assume that the hexagonal lattice might be optimal for a rather large family of window functions.

\section{Preliminary reductions}\label{sec_Frame_Bounds}

\subsection{Explicit formulae.} From this point on, we will only explore the reduced frame set and whenever we speak of the frame set we mean the reduced frame set. In this section we will present the explicit formulas of Janssen \cite{Jan96} for the upper and lower frame bound of Gabor frames with the standard Gaussian window on separable lattices. Let $g_0(t) = 2^{1/4} e^{-\pi t^2}$ be the standard Gaussian and $\L_{\alpha,\beta} = \alpha \Z \times \beta \Z$ with $(\alpha,\beta) \in \F(g)$ and $(\alpha \beta)^{-1} = n \in \N$. The frame bounds depend on the lattice parameters $\alpha, \beta$ and $n$. According to \cite{Jan96} the upper and lower frame bound are the essential supremum and infimum of the function
$$
	F(x,\omega;\alpha,\beta) = \frac{1}{\alpha \beta} \sum_{k \in \Z} \sum_{l \in \Z} (-1)^{\frac{k l}{\alpha \beta}} e^{-\frac{\pi}{2}\left( \frac{k^2}{\beta^2} + \frac{l^2}{\alpha^2} \right)} e^{2 \pi i kx} e^{2 \pi i l \omega}.
$$
By periodicity it suffices to consider $F$ on the unit square, $(x,\omega) \in [0,1] \times [0,1]$. In the case of even integer redundancy $(\alpha \beta)^{-1} \in 2 \N$, the alternating sign vanishes and we have
\begin{align*}
	F(x,\omega;\alpha,\beta) &= \frac{1}{\alpha \beta}\sum_{k \in \Z} \sum_{l \in \Z} e^{-\frac{\pi}{2}\left( \frac{k^2}{\beta^2} + \frac{l^2}{\alpha^2} \right)} e^{2 \pi i kx} e^{2 \pi i l \omega} \\
	&=    \frac{1}{\alpha \beta} \left(  \sum_{k \in \Z} e^{-\frac{\pi}{2} \frac{k^2}{\beta^2} } e^{2 \pi i kx}  \right)
	\left(  \sum_{l \in \Z} e^{-\frac{\pi}{2}  \frac{l^2}{\alpha^2}} e^{2 \pi i l \omega} \right)
\end{align*}
We rewrite this double sum using Jacobi's theta functions \cite{SteSha03}. Recall that for $z \in \mathbb{C}$ and $\tau \in \mathbb{H} = \lbrace \tau \in \mathbb{C} |~ \Im(\tau) > 0 \rbrace$ Jacobi's theta function is defined as
	$$
		\Theta(z,\tau) = \sum_{k=-\infty}^{\infty} e^{\pi i k^2 \tau} e^{2 \pi i k z}.
	$$
We can now rewrite the function arising in the case of even redundancy as
$$
	F(x,\omega;\alpha,\beta) = \frac{1}{\alpha \beta} \Theta\left(\omega,\frac{i}{2 \alpha^2}\right) \Theta\left(x,\frac{i}{2 \beta^2}\right).
$$
As Janssen already stated in \cite{Jan96} we find that $F$ takes its supremum for $(x, \omega) = (0,0)$ and its infimum for $(x, \omega) = \left(\frac{1}{2}, \frac{1}{2}\right)$. 

\subsection{Even redundancy} Fixing $(\alpha \beta)^{-1} = n$, with $n$ being even, we can write the lower and upper frame bound respectively as
$$
	A(\beta)  = n \, \Theta\left(\frac{1}{2}, \frac{i n^2 \beta^2}{2}\right) \Theta\left(\frac{1}{2},\frac{i}{2 \beta^2}\right) \quad \mbox{and} \quad
	B(\beta)  = n \, \Theta\left(0, \frac{i n^2 \beta^2}{2}\right) \Theta\left(0,\frac{i}{2 \beta^2}\right).
$$
This reduces the problem of optimizing $A$ and $B$ to optimization with respect to the lattice parameter $\beta$. We recall Jacobi's theta functions for the special arguments $z = 0$ and $z = 1/2$ and purely imaginary $\tau = i s$, $s \in \R_+$ to further simplify the statement of the problem
$$
	\theta_3(s) := \Theta(0, i s) = \sum_{k=-\infty}^{\infty} e^{-\pi k^2 s} \quad \mbox{and \quad}
	\theta_4(s) := \Theta(1/2, i s) = \sum_{k=-\infty}^{\infty} (-1)^k e^{-\pi k^2 s}.
$$
As in Janssen's work \cite{Jan96}, we can rewrite the frame bounds as
\begin{equation*}
	A(\beta) = n \, \theta_4\left(\frac{n^2 \beta^2}{2}\right) \theta_4\left(\frac{1}{2 \beta^2}\right) \text{ and } \,
	B(\beta) = n \, \theta_3\left(\frac{n^2 \beta^2}{2}\right) \theta_3\left(\frac{1}{2 \beta^2}\right).
\end{equation*}
The substitution $\beta = \sqrt{s/n}$ shows that the statements about the maximality of the lower frame bound and minimality of the upper frame bound follow from knowing that for $r, s > 0$,
$$
	\theta_4(rs) \theta_4\left(\frac{r}{s}\right) \leq \theta_4(r)^2 \quad \mbox{and} \quad
	\theta_3(rs) \theta_3\left(\frac{r}{s}\right) \geq \theta_3(r)^2 \qquad \mbox{with equality only for}~s=1.
$$

\subsection{Odd redundancy} The case of odd redundancy introduces an additional alternating sign. As announced above, we will use 
$$ \theta_{o}(s) = \sum_{k \in \mathbb{Z}}{e^{-\pi(2k+1)^2 s}}$$
to simplify representation. Using this function, we can rewrite the representation obtained by Janssen \cite{Jan96} in the case of $(\alpha \beta)^{-1} = n$ being odd as
\begin{align*}
	A(\beta) & = n \left(\theta_4\left(\frac{n^2 \beta^2}{2}\right) \theta_4\left(\frac{1}{2 \beta^2}\right) - 2 \theta_{o}\left(\frac{n^2 \beta^2}{2}\right) \theta_{o}\left(\frac{1}{2 \beta^2}\right)\right)\\
	B(\beta) & = n \left(\theta_3\left(\frac{n^2 \beta^2}{2}\right) \theta_3\left(\frac{1}{2 \beta^2}\right) - 2 \theta_{o}\left(\frac{n^2 \beta^2}{2}\right) \theta_{o}\left(\frac{1}{2 \beta^2}\right)\right).
\end{align*}

\subsection{A Useful Lemma.} A straight-forward analysis seems difficult because these functions have extremely small derivatives around $s=1$ (see Figure \ref{fig_theta3}). We proceed by exploiting the algebraic form of the term first and proving a slightly stronger statement.
\begin{lemma}\label{lem_useful_lemma}
	Let $F_r(s) = f(rs)f(r/s)$ with $f: \R_{+} \rightarrow \R_{+}$ differentiable and $r \in \R_+$ fixed. If
	$$
		s \frac{f'(s)}{f(s)} \qquad \mbox{is strictly increasing (decreasing) for}~s > 0,
	$$
	then $F_r(s)$ has its global minimum (maximum) and only critical point at $s=1$. 
\end{lemma}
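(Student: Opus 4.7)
The plan is to compute the logarithmic derivative of $F_r$ and recognize that it factors neatly in terms of the monotone quantity $g(t) = t f'(t)/f(t)$ appearing in the hypothesis. Specifically, I would write
$$
\log F_r(s) = \log f(rs) + \log f(r/s),
$$
differentiate in $s$, and multiply through by $s$ to obtain
$$
s\,\frac{F_r'(s)}{F_r(s)} = rs\,\frac{f'(rs)}{f(rs)} - \frac{r}{s}\,\frac{f'(r/s)}{f(r/s)} = g(rs) - g(r/s).
$$
This is the identity that does all the work, and it is where the assumption on $g$ enters most naturally.

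Next I would read off the conclusion from the monotonicity of $g$. Since $F_r(s) > 0$ by positivity of $f$, the sign of $F_r'(s)$ agrees with the sign of $g(rs) - g(r/s)$. The substitution $s \mapsto 1/s$ shows $rs$ and $r/s$ swap roles, so their common value $r$ occurs precisely at $s=1$; for $s>1$ we have $rs > r/s$ while for $0 < s < 1$ we have $rs < r/s$. If $g$ is strictly increasing, this forces $F_r'(s) < 0$ on $(0,1)$, $F_r'(s) > 0$ on $(1,\infty)$, and $F_r'(1) = 0$, giving $s=1$ as the unique critical point and a strict global minimum. The strictly decreasing case is identical with inequalities reversed, yielding the strict global maximum.

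There is really no obstacle here; the only subtlety is the observation that the logarithmic derivative of $F_r$ expresses itself as a difference of values of $g$ at arguments whose product is $r^2$, so that $s=1$ is singled out automatically. I would also note in passing that the statement and proof do not require $g$ to be continuous or defined on any specific subinterval beyond $\R_+$, which is exactly the level of generality needed to feed in the monotonicity consequences of Theorems \ref{log_convex} and \ref{log_concav} when applying the lemma to $\theta_3$ and $\theta_4$.
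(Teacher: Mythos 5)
Your proof is correct and rests on the same key identity as the paper's: writing $g(t)=t f'(t)/f(t)$, the critical-point equation for $F_r$ becomes $g(rs)=g(r/s)$, and strict monotonicity of $g$ forces $rs=r/s$, i.e.\ $s=1$. The only (cosmetic) difference is that you conclude globality by reading off the sign of $F_r'$ on $(0,1)$ and $(1,\infty)$ directly from $s F_r'(s)/F_r(s)=g(rs)-g(r/s)$, whereas the paper invokes the symmetry $F_r(s)=F_r(1/s)$ together with uniqueness of the critical point; your sign analysis is, if anything, slightly more self-contained.
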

\begin{proof}
	 The algebraic structure implies that $F_r(s) = F_r(1/s)$ and therefore there exists either a local minimum or a local maximum. Any critical point of $F_r$ satisfies
	$$
		 0 = \frac{d}{ds} F_r(s) = r f'(rs)f(r/s) - \frac{r}{s^2}f(rs)f'(r/s),
	$$
	which is equivalent to
	$$
		rs \frac{f'(rs)}{f(rs)} = \frac{r}{s} \frac{f'(r/s)}{f(r/s)}.
	$$
	The monotonicity assumption implies $rs = r/s$ and thus $s=1$ is the only solution and that the extremum is global.
\end{proof}

\begin{figure}[h!] 
	\centering
	\begin{tikzpicture}[scale=0.8]
			\begin{axis}[
			ymin=1, ymax=1.001, ytick={1,1.0001,1.0002,1.0003,1.0004,1.0005,1.0006,1.0007,1.0008,1.0009}, yticklabels={1,1.0001,1.0002,1.0003,1.0004,1.0005,1.0006,1.0007,1.0008,1.0009}]
			\addplot[samples=200,domain=0.4:2.5, ultra thick]{0.000002 +  ( 1+2*exp(-pi*6*x)))*(1+2*exp(-pi*(6/x)))       };
		\end{axis}
	\end{tikzpicture}
	\captionsetup{width=0.95\textwidth}
	\caption{The function $\theta_3(6s)\theta_3(6/s)$ near $s=1$.}\label{fig_theta3}
\end{figure}
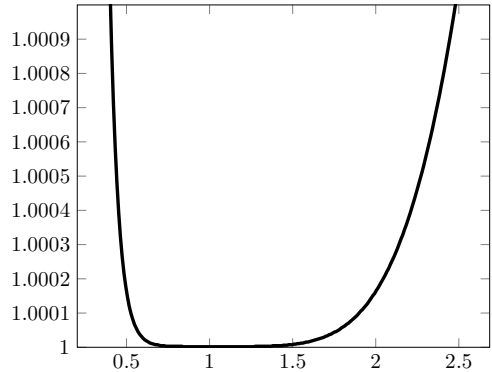
 
Note that 
$$
	\frac{d}{ds} \left(s \frac{f'(s)}{f(s)} \right)= \frac{f'(s)}{f(s)} + s\frac{f''(s)f(s) - f'(s)^2}{f(s)^2}
$$
and setting that expression to be positive or negative and rearranging gives precisely the expressions in Theorem \ref{log_convex} and Theorem \ref{log_concav}. Finally, we note a general log-convexity statement about functions of the type $\sum_{k=1}^{\infty}{a_k e^{- b_k s}}$, which implies the log-convexity of $\theta_3$.
\begin{lemma}\label{lem_theta3_log}
	Let $a_k, b_k \geq 0$ be sequences of positive real numbers such that $\sum_{k}{a_k} < \infty$. Then
	$$
		f(s) = \sum_{k=1}^{\infty}{a_k e^{- b_k s}} \qquad \mbox{satisfies} \quad \left(\log{(f(s))}\right)'' \geq 0.
	$$
\end{lemma}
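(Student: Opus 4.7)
The plan is to compute $(\log f)'' = (f''f - (f')^2)/f^2$ directly and show that the numerator is non-negative by a single application of the Cauchy--Schwarz inequality, as hinted in the discussion preceding Lemma \ref{lem_theta3_log}. Since $f$ is strictly positive (assuming at least one $a_k > 0$; the trivial case is vacuous), it suffices to prove $f(s) f''(s) \geq f'(s)^2$ for all $s > 0$.

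First I would justify termwise differentiation. For any fixed $s_0 > 0$ and $s$ in a small neighborhood of $s_0$, the summands $a_k b_k^j e^{-b_k s}$ (for $j = 0,1,2$) are dominated by $a_k \cdot C_j(s_0)$ for an appropriate constant, since $b_k^j e^{-b_k s}$ is uniformly bounded in $k$ on compact subsets of $\mathbb{R}_+$ (the function $x \mapsto x^j e^{-x\varepsilon}$ is bounded on $[0,\infty)$ for any $\varepsilon > 0$). Together with $\sum a_k < \infty$, this allows termwise differentiation twice, giving
$$
f'(s) = -\sum_{k=1}^{\infty} a_k b_k e^{-b_k s}, \qquad f''(s) = \sum_{k=1}^{\infty} a_k b_k^2 e^{-b_k s}.
$$

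Now the key step: apply Cauchy--Schwarz in $\ell^2(\mathbb{N})$ to the sequences $u_k = \sqrt{a_k} \, e^{-b_k s/2}$ and $v_k = \sqrt{a_k} \, b_k \, e^{-b_k s/2}$, which are square-summable by the domination argument above. This yields
$$
f'(s)^2 = \left(\sum_{k=1}^{\infty} u_k v_k\right)^2 \leq \left(\sum_{k=1}^{\infty} u_k^2\right)\left(\sum_{k=1}^{\infty} v_k^2\right) = f(s) f''(s),
$$
which is exactly the desired inequality. Dividing by $f(s)^2 > 0$ gives $(\log f)''(s) \geq 0$.

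I do not anticipate any genuine obstacle: the only mildly delicate point is verifying absolute convergence of the differentiated series so that the formal manipulations are legitimate, and this is routine once one observes that $b_k^j e^{-b_k s}$ is bounded uniformly in $k$ on any interval bounded away from $0$. The whole proof is essentially a one-line Cauchy--Schwarz computation, which is consistent with the author's remark that log-convexity of $\theta_3$ (the special case $a_k \in \{1,2\}$, $b_k = \pi k^2$) follows ``immediately.''
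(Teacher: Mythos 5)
Your proof is correct and follows essentially the same route as the paper: both reduce the claim to $f''(s)f(s) - f'(s)^2 \geq 0$ and obtain it by applying Cauchy--Schwarz to the factorization $a_k b_k e^{-b_k s} = \bigl(\sqrt{a_k}\, b_k e^{-b_k s/2}\bigr)\bigl(\sqrt{a_k}\, e^{-b_k s/2}\bigr)$. The only difference is that you explicitly justify termwise differentiation, which the paper leaves implicit.
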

\begin{proof} 
	We prove the statement in the form $ f''(s)f(s) - f'(s)^2 \geq 0$. Let
	$$
		f(s) = \sum_{k=1}^{\infty}{a_k e^{- b_k s}}.
	$$
	Then, by direct computation, our statement can be written as
	$$
	f''(s)f(s) - f'(s)^2 = \left( \sum_{k=1}^{\infty}{b_k^2 a_k e^{- b_k s}}\right) \left( \sum_{k=1}^{\infty}{a_k e^{- b_k s}}\right) -  \left(\sum_{k=1}^{\infty}{b_k a_k e^{- b_k s}}\right)^2 \geq 0.
	$$
	Using the fact that
	\begin{equation*}
		\sum_{k = 1}^\infty a_k b_k e^{-b_k s} = \sum_{k = 1}^\infty \left( \sqrt{a_k} b_k e^{-b_k s/2} \right) \left( \sqrt{a_k} e^{-b_k s/2} \right),
	\end{equation*}
	the proof follows by applying the Cauchy-Schwarz inequality.
\end{proof}

\section{Proof of Theorem \ref{log_convex}}
We will show that for $r >0$ and
$$
	\theta_3(s) = \sum_{k \in \Z} e^{-\pi k^2 s} = 1 + 2\sum_{k \geq 1} e^{-\pi k^2 s}
$$
we have 
\begin{equation*}\label{eq_proof_theta3}
	\theta_3(rs)\theta_3\left(\frac{r}{s}\right) \geq \theta_3(r)^2 \qquad \mbox{with equality only for $s=1$ (first part of Theorem \ref{alternativ}).}
\end{equation*}
This will imply the statement about the upper frame bound for even redundancy in Theorem \ref{main} and follows from the algebraic Lemma \ref{lem_useful_lemma} if we can show the following (see Figure \ref{fig_theta3_identity}.
\begin{theorem}
	$$
		s \frac{\theta_3'(s)}{\theta_3(s)} \text{ is strictly increasing on } \R_+.
	$$
\end{theorem}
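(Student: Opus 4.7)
The claim that $\phi(s) := s\theta_3'(s)/\theta_3(s)$ is strictly increasing on $\R_+$ is, by differentiation and multiplication by the positive quantity $\theta_3(s)^2/s$, equivalent to the sharper form of log-convexity
$$
\theta_3''(s)\theta_3(s) - \theta_3'(s)^2 + \frac{\theta_3'(s)\theta_3(s)}{s} > 0,
$$
which is precisely the nontrivial half of Theorem \ref{log_convex} (the other half, $-\theta_3'\theta_3/s > 0$, is immediate from $\theta_3'(s) < 0$). The plan is therefore to prove Theorem \ref{log_convex}, following the two-step strategy announced in the introduction: a direct estimate on $[1, \infty)$, followed by a Jacobi-inversion argument that backpropagates the conclusion to $(0, 1]$.

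On $[1, \infty)$, the heart of the argument is that $\theta_3$ is essentially dominated by its first two terms. Writing
$$
\theta_3(s) = 1 + 2 e^{-\pi s} + \rho(s), \qquad \rho(s) = 2\sum_{k \geq 2} e^{-\pi k^2 s},
$$
and similarly expanding $\theta_3'$ and $\theta_3''$, I would show that the combination $\theta_3''\theta_3 - (\theta_3')^2 + \theta_3\theta_3'/s$ equals
$$
e^{-\pi s}\Bigl(2\pi^2 - \tfrac{2\pi}{s}\Bigr) \;+\; (\text{terms of order } e^{-2\pi s}\text{ or smaller}).
$$
Since $2\pi^2 - 2\pi/s \geq 2\pi(\pi-1) > 0$ on $[1,\infty)$, and since $\rho$ together with its derivatives decays like $e^{-4\pi s}$ (controlled by the crude tail bound $\rho(s) \leq 2 e^{-4\pi s}/(1 - e^{-\pi})$ and analogues for $\rho', \rho''$), the leading positive contribution dominates all remainders and the inequality follows.

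To pass to $(0,1]$, I would invoke the Jacobi inversion formula $\theta_3(s) = s^{-1/2} \theta_3(1/s)$: applying $s \cdot \frac{d}{ds}\log$ to both sides yields the identity highlighted in the introduction,
$$
\phi(s) + \phi(1/s) = -\tfrac{1}{2}.
$$
Given strict monotonicity of $\phi$ on $[1,\infty)$, this identity propagates it to $(0,1]$: if $0 < s_1 < s_2 \leq 1$, then $1 \leq 1/s_2 < 1/s_1$, so $\phi(1/s_2) < \phi(1/s_1)$ and hence $\phi(s_1) = -\tfrac{1}{2} - \phi(1/s_1) < -\tfrac{1}{2} - \phi(1/s_2) = \phi(s_2)$. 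Continuity at $s=1$ then glues the two pieces into strict monotonicity on all of $\R_+$.

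The main obstacle is the direct estimate on $[1, \infty)$. Although the leading-order picture is clean, the positive term $2\pi^2 e^{-\pi s}$ coming from $\theta_3''\theta_3$ is partially cancelled by $-2\pi e^{-\pi s}/s$ coming from $\theta_3\theta_3'/s$ already at order $e^{-\pi s}$, so one has to verify carefully that the $O(e^{-2\pi s})$ corrections — in particular the $-4\pi^2 e^{-2\pi s}$ piece from $(\theta_3')^2$ and cross terms such as $\theta_3''(s)\cdot 2e^{-\pi s}$ — do not spoil the sign. Making this precise is a bookkeeping exercise with explicit tail bounds, but at the worst point $s=1$ the leading positive contribution is already an order of magnitude larger than the $e^{-2\pi}$ corrections, leaving plenty of room.
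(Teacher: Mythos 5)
Your proposal is correct and follows essentially the same two-step strategy as the paper: a direct leading-term estimate establishing monotonicity on $[1,\infty)$ (the paper organizes the bookkeeping slightly differently, reducing to $s\theta_3''+\theta_3'\theta_3-s(\theta_3')^2>0$ via $\theta_3\geq 1$ and bounding the tails by geometric series), followed by the Jacobi-inversion identity $\phi(s)+\phi(1/s)=-\tfrac12$ to propagate the monotonicity to $(0,1]$ in exactly the way you describe.
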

This statement follows immediately from the upcoming two facts which we will now prove.
\begin{itemize}
	\item \textbf{Fact 1.} The function  $ s \theta_3'(s)/\theta_3(s)$ is strictly increasing for $s \geq 1$.
	\item \textbf{Fact 2.} For all $s > 0$, we have 
	$$
		s \frac{\theta_3'(s)}{\theta_3(s)} + \frac{1}{s}\frac{\theta_3'\left(\frac{1}{s}\right)}{\theta_3\left(\frac{1}{s}\right)} = -\frac{1}{2}.
	$$
\end{itemize}

\begin{figure}[ht!]
	\centering
	\begin{tikzpicture}[scale=0.8]
		\begin{axis}[
			ytick={-0.5,-0.25,0}, yticklabels={-0.5,-0.25,0}]
			\addplot[samples=200,domain=0.1:3, ultra thick]{    x*(-50*pi*exp(-25*pi*x) -pi*32*exp(-16*pi*x)  -pi*18*exp(-9*pi*x) -pi*8* exp(-4*pi*x) - pi*2*exp(-pi*x))/(1+2*exp(-25*pi*x) +2*exp(-16*pi*x)+2*exp(-9*pi*x)+2*exp(-4*pi*x)  +2*exp(-pi*x))      };
		\end{axis}
	\end{tikzpicture}
	\captionsetup{width=0.95\textwidth}
	\caption{The function $ s \theta_3'(s)/\theta_3(s)$.}\label{fig_theta3_identity}
\end{figure}
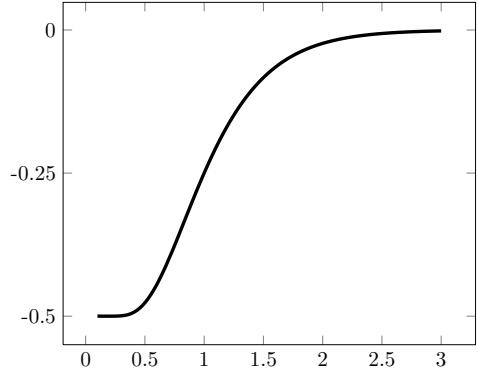

\begin{lem}[Fact 1]
	The function  $ s \theta_3'(s)/\theta_3(s)$ is strictly increasing for $s \geq 1$.
\end{lem}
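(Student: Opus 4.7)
The plan is to recast monotonicity as a single differential inequality and split it into a Cauchy--Schwarz piece and a termwise comparison that becomes effective once $s \geq 1$. Setting $q_k(s) = e^{-\pi k^2 s}$ and
$$
\sigma(s) = \sum_{k \geq 1} q_k, \quad \nu(s) = \sum_{k \geq 1} k^2 q_k, \quad \tau(s) = \sum_{k \geq 1} k^4 q_k,
$$
so that $\theta_3 = 1+2\sigma$, $\theta_3' = -2\pi\nu$, and $\theta_3'' = 2\pi^2\tau$, a direct differentiation gives
$$
\theta_3(s)^2 \cdot \frac{d}{ds}\left(\frac{s\theta_3'(s)}{\theta_3(s)}\right) = 2\pi\Bigl[\bigl(\pi s\tau - (1+2\sigma)\nu\bigr) + 2\pi s\bigl(\sigma\tau - \nu^2\bigr)\Bigr].
$$
It therefore suffices to show that both bracketed expressions are strictly positive on $s \geq 1$.

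The second bracket is handled by Cauchy--Schwarz applied to $\sqrt{q_k}$ and $k^2\sqrt{q_k}$, yielding $\nu^2 \leq \sigma\tau$ with strict inequality (this is essentially the content of Lemma \ref{lem_theta3_log} and already gives plain log-convexity of $\theta_3$). For the first bracket I would expand
$$
\pi s\tau - (1+2\sigma)\nu = \sum_{k \geq 1} k^2\bigl(\pi s k^2 - 1 - 2\sigma(s)\bigr) q_k(s)
$$
and compare termwise. For $s \geq 1$ and $k \geq 1$ we have $\pi s k^2 \geq \pi$, while $\sigma$ is decreasing in $s$, so
$$
1 + 2\sigma(s) \leq 1 + 2\sigma(1) = 1 + 2\sum_{k \geq 1} e^{-\pi k^2} < 1 + \frac{2e^{-\pi}}{1-e^{-\pi}} < \pi,
$$
the final step being a direct numerical check ($e^{-\pi} \approx 0.0432$, so $2\sigma(1) < 0.1$, well below $\pi - 1$). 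Every summand is then strictly positive, delivering $\pi s\tau - (1+2\sigma)\nu > 0$.

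I anticipate no real obstacle: the whole argument reduces to Cauchy--Schwarz together with the comfortable uniform estimate $1 + 2\sigma(s) < \pi$ on $s \geq 1$. The reason the method is confined to the range $s \geq 1$ is that for small $s$ the quantity $\sigma(s)$ blows up and the termwise comparison fails, which is precisely the regime that Fact 2 (the Jacobi-type identity) is designed to cover.
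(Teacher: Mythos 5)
Your proof is correct, but it takes a genuinely different route from the paper's. Your identity
$$
\theta_3(s)^2\,\frac{d}{ds}\!\left(\frac{s\theta_3'(s)}{\theta_3(s)}\right)=\theta_3'\theta_3+s\bigl(\theta_3''\theta_3-(\theta_3')^2\bigr)
=2\pi\Bigl[\bigl(\pi s\tau-(1+2\sigma)\nu\bigr)+2\pi s\bigl(\sigma\tau-\nu^2\bigr)\Bigr]
$$
checks out with $\theta_3=1+2\sigma$, $\theta_3'=-2\pi\nu$, $\theta_3''=2\pi^2\tau$, and your two-part positivity argument is sound: $\sigma\tau-\nu^2\ge 0$ is exactly the Cauchy--Schwarz computation of Lemma \ref{lem_theta3_log}, and the termwise bound for the first bracket only needs $\pi s k^2\ge\pi>1+2\sigma(1)\ge 1+2\sigma(s)$ for $s\ge1$, the weakest case being $k=1$, which your numerical estimate $2\sigma(1)<2e^{-\pi}/(1-e^{-\pi})<0.1$ settles. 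The paper instead uses $\theta_3(s)\ge1$ and $\theta_3''>0$ to pass to the stronger inequality $s\theta_3''+\theta_3'\theta_3-s(\theta_3')^2>0$, and then bounds the three resulting series $(I)$, $(II)$, $(III)$ by geometric series, asserting that these estimates close for $s\ge 0.7$; the final verification there is left as a numerical comparison of explicit functions of $e^{-\pi s}$. Your decomposition buys a cleaner and more transparent endgame: the Cauchy--Schwarz piece is handled exactly rather than estimated, and the remaining inequality reduces to the single uniform bound $1+2\sigma(s)<\pi$, so no geometric-series majorization or implicit numerical check over a range of $s$ is needed. The paper's cruder estimates buy a slightly larger range of validity ($s\ge0.7$), but that extra margin is not used anywhere, since Fact 2 transfers the result from $s\ge1$ to all of $\mathbb{R}_+$.
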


\begin{proof}
	We will now show that the derivative is positive. This is equivalent to showing that
	$$
		s \theta''_3(s)\theta_3(s) + \theta'_3(s)\theta_3(s) - s \theta'_3(s)^2 > 0,
	$$
	where the first term is positive and the last two terms are negative. Since, $\theta_3(s) \geq 1$ the statement above is implied by the stronger inequality
	$$
		s \theta''_3(s) + \theta'_3(s)\theta_3(s) - s \theta'_3(s)^2 > 0,
	$$
	which can be equivalently written as
	$$
		\underbrace{s \left(2\sum_{k \geq 1}{\pi^2 k^4 e^{- \pi k^2 s}} \right)}_{(I)} - \underbrace{\left(2\sum_{k \geq 1}{\pi k^2 e^{- \pi k^2 s}}\right)}_{(II)} \underbrace{\left(1+2\sum_{k \geq 1}{e^{- \pi k^2 s}}\right)}_{\theta_3(s)} - s\underbrace{\left( 2 \sum_{k \geq 1}\pi k^2 {e^{- \pi k^2 s}}\right)^2}_{(III)} > 0.
	$$
	We will now establish this inequality using term-by-term estimates. The first term $(I)$ is bounded from below by
	$$
		(I) = s \left(2\sum_{k \geq 1}{\pi^2 k^4 e^{- \pi k^2 s}} \right) \geq 2s \pi^2 e^{- \pi s} .
	$$
	In order to control $(II)$ and $(III)$ we bound expressions using geometric series. More precisely, we use
	$$
		\sum_{k \geq m} q^k  = \frac{ q^m}{1-q} \quad \mbox{and} \quad  \sum_{k \geq m} k q^k = q^m \frac{m-(m-1)q}{(1-q)^2}.
	$$
	This gives
	\begin{align*}
		(II) &= 2\sum_{k \geq 1}{\pi k^2 e^{- \pi k^2 s}} \leq 2\pi e^{-\pi s}  + 2\pi  \sum_{k \geq 4}{k e^{- \pi k s}} = 2\pi \left(e^{-\pi s} + e^{-4 \pi s}\frac{(4 - 3 e^{-\pi s})}{(1-e^{-\pi s})^2} \right) \\
		\theta_3(s) &= 1+2\sum_{k \geq 1}{e^{- \pi k^2 s}} \leq 1 + 2\sum_{k \geq 1}{e^{- \pi k s}} = 1 + 2 \frac{e^{-\pi s}}{1-e^{-\pi s}} \\
		(III) &\leq 4\pi^2 \left(e^{-\pi s} + e^{-4 \pi s}\frac{(4 - 3 e^{-\pi s})}{(1-e^{-\pi s})^2} \right)^2.
	\end{align*}
	These estimates imply the statement for $s \geq 0.7$ (and, in particular, for $s \geq 1$).
\end{proof}

\begin{lem}[Fact 2] We have, for all $ s > 0$, 
	$$
		s \frac{\theta'_3(s)}{\theta_3(s)} + \frac{1}{s} \frac{\theta'_3(1/s)}{\theta_3(1/s)} = -\frac{1}{2}.
	$$
\end{lem}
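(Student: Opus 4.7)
The plan is to deduce this identity directly from Jacobi's modular transformation formula
$$
    \theta_3(1/s) = \sqrt{s}\,\theta_3(s), \qquad s > 0,
$$
which is the $z=0$, $\tau = is$ specialization of the classical functional equation for $\Theta(z,\tau)$. Since the right-hand side of the claimed identity is the constant $-1/2$, it is natural to reach it by a single logarithmic differentiation of an identity whose right-hand side contains $\tfrac{1}{2}\log s$.

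Concretely, I would set $L(s) = \log \theta_3(s)$, which is well defined on $\R_+$ since $\theta_3$ is strictly positive. Taking logarithms of the Jacobi identity yields
$$
    L(1/s) \;=\; \tfrac{1}{2}\log s + L(s).
$$
Differentiating both sides with respect to $s$ and using $\tfrac{d}{ds} L(1/s) = -s^{-2} L'(1/s)$ gives
$$
    -\frac{1}{s^2}\, L'(1/s) \;=\; \frac{1}{2s} + L'(s).
$$
Multiplying through by $-s$ and rewriting $L'(s) = \theta_3'(s)/\theta_3(s)$ produces exactly
$$
    s\,\frac{\theta_3'(s)}{\theta_3(s)} + \frac{1}{s}\,\frac{\theta_3'(1/s)}{\theta_3(1/s)} \;=\; -\frac{1}{2},
$$
as required.

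There is no real obstacle beyond invoking the Jacobi transformation; the only point worth being careful about is that the modular identity is a statement about $\theta_3$ restricted to the positive reals (obtained by specializing $\tau\mapsto-1/\tau$ on the imaginary axis), so differentiation in $s$ is justified and the chain rule applies without any analytic subtlety. If desired, one can alternatively verify the identity by writing both logarithmic derivatives as the series $-\pi\sum_{k\geq 1} 2k^2 e^{-\pi k^2 s}/\theta_3(s)$ and applying Poisson summation directly to the numerator and denominator, but the route through Jacobi's transformation is by far the shortest.
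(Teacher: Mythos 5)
Your proof is correct and follows essentially the same route as the paper: both arguments differentiate the Jacobi transformation $\theta_3(1/s)=\sqrt{s}\,\theta_3(s)$ and rearrange, with your version merely taking logarithms first so that the bookkeeping is a logarithmic differentiation rather than a product-rule computation followed by a division. No gaps.
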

\begin{proof}
	We use the Jacobi identity
	$$
		\theta_3\left(\frac{1}{s}\right) =  \sqrt{s} \theta_3(s)
	$$
	which is easily derived by using Poisson summation (see e.g.\ \cite{SteSha03}). Differentiating the identity on both sides gives
	$$
		-\frac{1}{s^2}\theta_3'\left(\frac{1}{s}\right) = \frac{1}{2 \sqrt{s} }\theta_3(s) + \sqrt{s} \theta_3'(s).
	$$
	Multiplying with $-s$ yields
	$$
		\frac{1}{s} \theta_3'\left(\frac{1}{s}\right) = - \frac{\sqrt{s}}{2} \theta_3(s) - s^{3/2} \theta_3'(s),
	$$
	which we now divide by $\sqrt{s} \theta_3(s)$ (which equals $\theta_3(1/s)$ because of the Jacobi identity)
	$$
	-\frac{1}{2} - s \frac{\theta_3'(s)}{\theta_3(s)} =  \frac{1}{s} \frac{ \theta_3'\left(\frac{1}{s}\right) }{\sqrt{s} \theta_3\left(s\right) } = \frac{1}{s} \frac{ \theta_3'\left(\frac{1}{s}\right) }{ \theta_3\left(\frac{1}{s}\right) }.
	$$
\end{proof}
Theorem \ref{log_convex} follows, since
$$
	0 < \frac{d}{ds}\left(s \frac{\theta'_3(s)}{\theta_3(s)} \right) = \frac{\theta'_3(s)}{\theta_3(s)} + s \frac{\theta''_3(s) \theta_3(s)-\theta'_3(s)^2}{\theta_3(s)^2}
$$
which is equivalent to
$$
	\theta_3''(s)\theta_3(s) - \theta_3'(s)^2 > -\frac{\theta_3'(s) \theta_3(s)}{s}.
$$
We finally show that the right-hand side of the last inequality is positive. Since,
$$
	\theta_3(s) = 1 + 2 \sum_{k \in \N} e^{-\pi k^2 s} > 1
$$
and
\begin{equation*}
	\theta'_3(s) = - 2 \pi \sum_{k \in \N} k^2 e^{-\pi k^2 s} < 0,
\end{equation*}
we see that
$$
	\frac{\theta_3'(s) \theta_3(s)}{s} < 0
$$
and Theorem \ref{log_convex} is proved.

\section{Proof of Theorem \ref{log_concav}}
We will show that for $r \in \R_+$ fixed we have
\begin{equation*}
	\theta_4(rs)\theta_4\left(\frac{r}{s}\right) \leq \theta_4(r)^2
\end{equation*}
with equality only for $s=1$ (second part of Theorem \ref{alternativ}). This will imply the statement about the lower frame bound for even redundancy in Theorem \ref{main} and follows again from the algebraic Lemma \ref{lem_useful_lemma}. The proof uses the Jacobi triple product representation \cite{SteSha03}. The proof has some curious algebraic simplifications and we know of no other way to establish the result in its full generality.
\begin{theorem}\label{thm_theta4_monotone}
	$$
			s \frac{\theta'_4(s)}{\theta_4(s)} \qquad \mbox{is strictly decreasing on}~\R_+.
	$$
\end{theorem}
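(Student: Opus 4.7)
The plan is to use the Jacobi triple product to factor $\theta_4$ into pieces with a common analytic structure, and then reduce the theorem to a single one-variable monotonicity that can be checked directly. Setting $q=e^{-\pi s}$, the triple product identity gives
$$\theta_4(s) = \prod_{n\geq 1} \bigl(1-e^{-2\pi n s}\bigr)\bigl(1-e^{-(2n-1)\pi s}\bigr)^{2}.$$
Every factor has the form $1-e^{-cs}$ with $c>0$. Taking the logarithmic derivative in $s$ (the resulting series of log-derivatives converges uniformly on compact subsets of $\R_+$, which justifies the term-by-term differentiation) and multiplying by $s$, I would obtain
$$s\,\frac{\theta_4'(s)}{\theta_4(s)} = \sum_{n\geq 1}\frac{2\pi n s}{e^{2\pi n s}-1} + 2\sum_{n\geq 1}\frac{(2n-1)\pi s}{e^{(2n-1)\pi s}-1}.$$

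The crucial observation is that every single summand on the right is of the form $h(c_k s)$ with $c_k>0$, where
$$h(u) := \frac{u}{e^u - 1}.$$
Hence the theorem reduces to showing that $h$ is strictly decreasing on $(0,\infty)$: under that single ingredient, each map $s\mapsto h(c_k s)$ is strictly decreasing in $s$, and a countable sum of strictly decreasing positive functions is strictly decreasing. To verify the monotonicity of $h$, I would compute
$$h'(u) = \frac{e^u-1-ue^u}{(e^u-1)^2},$$
set $\psi(u):=e^u-1-ue^u$, note that $\psi(0)=0$ and $\psi'(u)=-ue^u<0$ for $u>0$, conclude that $\psi(u)<0$ on $(0,\infty)$, and hence $h'(u)<0$ on $(0,\infty)$.

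The step I would flag as the main \emph{subtlety} is not a calculation but a structural one: the whole argument succeeds precisely because every factor produced by the triple product for $\theta_4$ has the sign-aligned form $1-e^{-cs}$, whose rescaled logarithmic derivative is the decreasing function $h(u)=u/(e^u-1)$. The analogous product expansion of $\theta_3$ contains factors $1+e^{-cs}$ whose rescaled logarithmic derivatives are $-u/(e^u+1)$, a function which is \emph{not} monotonic on $(0,\infty)$. That is precisely what blocks the same one-shot argument for $\theta_3$ and forces the Jacobi-identity route used in the proof of Theorem~\ref{log_convex}; conversely, it is exactly what makes the $\theta_4$ argument feel like a small algebraic miracle. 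Once this structural alignment is recognized, the proof of Theorem~\ref{thm_theta4_monotone} collapses to the elementary lemma about $h$ above.
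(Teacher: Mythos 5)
Your proof is correct and follows essentially the same route as the paper: both apply the Jacobi triple product, reduce to the per-factor logarithmic derivatives $ms/(e^{ms}-1)$, and verify monotonicity via the elementary inequality $e^{u}-1-ue^{u}<0$ for $u>0$. Your structural remark about why the argument fails for $\theta_3$ (factors $1+e^{-cs}$ yielding a non-monotone rescaled log-derivative) also matches the paper's observation that the same product argument does not work for $\theta_3$.
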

\begin{proof}
	We use the Jacobi triple product representation for $s \in \R_+$
	$$
		\theta_4(s) = \prod_{k \geq 1}  \left(1-e^{-2 k \pi s}\right) \left(1- e^{-(2 k-1) \pi s}\right)^2  = \prod_{k=1}^\infty \theta_{4,k}(s).
	$$
	Using the product rule we will show that for every $k \in \mathbb{N}$ and $s>0$ 
	$$
	 \frac{d}{ds}\left(s \frac{\theta'_{4,k}(s)}{\theta_{4,k}(s)}\right) < 0
	 $$
	 which then immediately implies
	$$
		\frac{d}{ds} \left(s \frac{\theta'_4(s)}{\theta_4(s)}\right) =  \frac{d}{ds} \left(s \frac{\left( \prod_{k=1}^\infty \theta_{4,k}(s) \right)' }{ \prod_{k=1}^\infty \theta_{4,k}(s)}\right)    = \frac{d}{ds} \left(s \sum_{k \geq 1}  \frac{\theta'_{4,k}(s)}{\theta_{4,k}(s)}\right) = \sum_{k \geq 1} \frac{d}{ds}\left(s \frac{\theta'_{4,k}(s)}{\theta_{4,k}(s)}\right) < 0.
	$$
	A simple computation yields
		\begin{align*}
			s \frac{\theta'_{4,k}(s)}{\theta_{4,k}(s)} & = s \left(\frac{2k\pi e^{-2k\pi s}}{1-e^{-2k\pi s}} + 2\frac{(2k-1)\pi e^{-(2k-1)\pi s}}{1-e^{-(2k-1)\pi s}}\right)\\
			& = \frac{2k\pi s}{e^{2k\pi s}-1} + 2\frac{(2k-1)\pi s}{e^{(2k-1)\pi s}-1}.
		\end{align*}
	
	Both terms in the last sum are of the form $ms/(e^{ms}-1)$ for some $m \in \R_+$. Note that
	$$
		\frac{d}{ds}\left(\frac{ms}{e^{ms}-1}\right) = m \frac{e^{ms}-(1+ms\, e^{ms})}{\left(e^{ms}-1\right)^2} < 0,
	$$
	which is quickly checked using the elementary inequality
	$$
		e^{\lambda} < 1 + \lambda \, e^{\lambda}  \quad \Leftrightarrow \quad 1 - \lambda < e^{-\lambda} \qquad \mbox{for all}~\lambda > 0.
	$$
	Therefore, the statement follows since all terms involved are negative.
\end{proof}
The part concerning the lower frame bound in Theorem \ref{main} and the refined log-concavity statement in Theorem \ref{alternativ} follow immediately. Theorem \ref{log_concav} follows, since
$$
	0 > \frac{d}{ds}\left(s \frac{\theta'_4(s)}{\theta_4(s)} \right) = \frac{\theta'_4(s)}{\theta_4(s)} + s \frac{\theta''_4(s) \theta_4(s)-\theta'_4(s)^2}{\theta_4(s)^2}
$$
which is equivalent to
$$
	\theta_4''(s)\theta_4(s) - \theta_4'(s)^2 < -\frac{\theta_4'(s) \theta_4(s)}{s}.
$$
We finally show that the right-hand side of the last inequality is negative. Since,
$$
	\theta_4(s) = 1 + 2 \sum_{k \in \N} (-1)^k e^{-\pi k^2 s} \qquad \mbox{we have} \qquad 	\theta'_4(s) = - 2 \pi \sum_{k \in \N} (-1)^k k^2 e^{-\pi k^2 s}.
$$
We see that $\lim_{s \to \infty} \theta_4(s) = 1$ and that $\lim_{s \to \infty} s \theta'_4(s) = 0$ therefore
$$
	\lim_{s \to \infty} s \, \frac{\theta'_4(s)}{\theta_4(s)} = 0.
$$
We just proved that this term is strictly decreasing, this implies that
$$
	s \frac{\theta'_4(s)}{\theta_4(s)} > 0 \qquad \mbox{for all}~s > 0 \qquad \mbox{and thus} \qquad \frac{\theta'_4(s) \theta_4(s)}{s} > 0
$$
which completes Theorem \ref{log_concav}.
\section{Proof of Theorem \ref{oddupp}}
We want to show that for $r \in \R_+$ fixed
$$
	\theta_3(rs) \theta_3\left(\frac{r}{s}\right) - 2 \theta_{o}(r s) \theta_{o}\left(\frac{r}{s}\right) \qquad \mbox{is minimal for}~s=1.
$$
This implies the statement about the upper frame bound in Theorem \ref{main} for odd redundancy. From Theorem \ref{alternativ} we know that $\theta_3(rs) \theta_3\left(\frac{r}{s}\right)$ is minimal for $s=1$.
\begin{theorem}\label{omax}
	Let $r \in \R_+$. The quantity
	$$
		\theta_{o}(r s) \theta_{o}\left(\frac{r}{s}\right) \qquad \mbox{is maximal for}~s=1.
	$$
\end{theorem}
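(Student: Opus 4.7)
The plan is to follow the hint in the paper: use Poisson summation to express $\theta_o$ in terms of $\theta_4$, and then deduce the claim from the already-established inequality $\theta_4(Rs)\theta_4(R/s) \leq \theta_4(R)^2$ in Theorem \ref{alternativ}. The key is the modular-type identity
$$
\theta_o(s) \;=\; \frac{1}{2\sqrt{s}}\,\theta_4\!\left(\frac{1}{4s}\right),
$$
valid for all $s > 0$. To prove this I would write $(2k+1)^2 = 4(k+\tfrac12)^2$ so that
$\theta_o(s) = \sum_{k \in \mathbb{Z}} e^{-4\pi s(k+1/2)^2}$, and then apply Poisson summation to the Schwartz function $f(x) = e^{-4\pi s x^2}$ shifted by $1/2$. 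Since $\widehat{f}(\xi) = \frac{1}{2\sqrt{s}}e^{-\pi \xi^2/(4s)}$ and the half-integer shift produces the alternating factor $e^{2\pi i k/2} = (-1)^k$ on the Fourier side, summing yields exactly $\frac{1}{2\sqrt{s}}\theta_4(1/(4s))$.

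Once this identity is in hand, I substitute $s \mapsto rs$ and $s \mapsto r/s$. The prefactors combine as $\frac{1}{2\sqrt{rs}}\cdot\frac{1}{2\sqrt{r/s}} = \frac{1}{4r}$, independent of $s$, so that
$$
\theta_o(rs)\,\theta_o(r/s) \;=\; \frac{1}{4r}\,\theta_4\!\left(\frac{1}{4rs}\right)\theta_4\!\left(\frac{s}{4r}\right).
$$
Setting $R = 1/(4r)$, the right-hand side takes the form $\frac{1}{4r}\,\theta_4(R/s)\,\theta_4(Rs)$, to which the second inequality of Theorem \ref{alternativ} applies directly with the dilation $s \mapsto s$. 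This yields
$$
\theta_o(rs)\,\theta_o(r/s) \;\leq\; \frac{1}{4r}\,\theta_4(R)^{2} \;=\; \frac{1}{4r}\,\theta_4\!\left(\frac{1}{4r}\right)^{\!2} \;=\; \theta_o(r)^{2},
$$
where the final equality is precisely the identity above evaluated at $s \mapsto r$. The equality case comes along for free from Theorem \ref{alternativ}: equality holds if and only if $s=1$.

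There is essentially no obstacle beyond verifying the Poisson summation identity carefully; the scaling miracle that makes the prefactors clump into a single constant $1/(4r)$ and, simultaneously, sends the pair $(1/(4rs),\,s/(4r))$ into the "reciprocal pair" form $(R/s, Rs)$ required by Theorem \ref{alternativ} is what makes the reduction work. The only mildly delicate step is keeping the Fourier-transform normalizations straight so that the $(-1)^k$ factor appears on the correct side, turning $\theta_o$ (a sum over odd squares) into $\theta_4$ (an alternating sum over all squares).
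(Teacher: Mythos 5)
Your proposal is correct and follows essentially the same route as the paper: the Poisson-summation identity $\theta_o(s) = \tfrac{1}{2\sqrt{s}}\,\theta_4\bigl(\tfrac{1}{4s}\bigr)$, the observation that the prefactors combine to the $s$-independent constant $\tfrac{1}{4r}$ while the arguments form the reciprocal pair $\bigl(\tfrac{1}{4rs}, \tfrac{s}{4r}\bigr)$, and the reduction to the $\theta_4$ inequality of Theorem \ref{alternativ}. Your write-up is in fact slightly more complete than the paper's, since you also identify the constant $\tfrac{1}{4r}\theta_4(\tfrac{1}{4r})^2$ as $\theta_o(r)^2$ and note that the equality case $s=1$ is inherited from Theorem \ref{alternativ}.
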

\begin{proof}
	Using Poisson's formula the result follows from Theorem \ref{alternativ}. We have
	\begin{equation*}
		\theta_o(rs) = \sum_{k \in \Z} e^{-\pi (2k-1)^2 r s} = \sum_{k \in \Z} e^{-4 \pi (k-1/2)^2 r s} = \frac{1}{2 \sqrt{r s}} \sum_{k \in \Z} e^{-\pi i k} e^{-\frac{\pi k^2}{4 r s}} = \frac{1}{2\sqrt{r s}} \theta_4\left(\frac{1}{4 r s}\right).
	\end{equation*}
	In the same manner we derive
	\begin{equation*}
		\theta_o(r/s) = \frac{\sqrt{s}}{2\sqrt{r}} \theta_4\left(\frac{s}{4 r}\right).
	\end{equation*}
	Therefore, we have
	\begin{equation*}
		\theta_o(rs)\theta_o(r/s) = \frac{1}{4r} \, \theta_4\left(\frac{1}{4 r s}\right) \theta_4\left(\frac{s}{4 r}\right)
	\end{equation*}
	and the statement follows from the results about the $\theta_4$ function.
\end{proof}

There exists another argument that suffices to prove a slightly weaker result (needing $r \geq 1/2$) that would still be sufficient. We present it because the proof is slightly more flexible and could also be used to give an alternative proof of a weaker (but also sufficient) version of Theorem \ref{log_convex}. 
\begin{lemma} \label{lemming}
	$$
		s\frac{ \theta_{o}'(s)}{\theta_{o}(s)} \qquad \begin{cases} \mbox{is strictly decreasing on}~\left\{s:s \geq \frac{1}{4}\right\}\\
		\geq  \theta_{o}'(1)/\theta_{o}(1)\qquad \mbox{for all}~s \leq \frac{1}{4}.\end{cases}
	$$
\end{lemma}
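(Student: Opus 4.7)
The strategy mirrors the two-step proof of Theorem \ref{log_convex}: direct series manipulation for $s$ bounded away from zero, and a Jacobi-type identity to handle the complementary small-$s$ region. For the first claim ($s\theta_o'/\theta_o$ strictly decreasing on $s\ge 1/4$), multiplying by $\theta_o(s)^2>0$ reduces the statement to
\[
	s\,\theta_o''(s)\theta_o(s)+\theta_o'(s)\theta_o(s)-s\,\theta_o'(s)^2<0.
\]
Inserting $\theta_o(s)=2\sum_{k\ge 0}e^{-\pi m_k s}$ with $m_k=(2k+1)^2$ and symmetrizing in $i\leftrightarrow j$ recasts this as
\[
	\sum_{i,j\ge 0}\bigl[(m_i+m_j)-\pi s(m_i-m_j)^2\bigr]e^{-\pi(m_i+m_j)s}>0.
\]
The diagonal $i=j$ is pointwise positive, with the $(0,0)$ pair alone providing a reserve of $2e^{-2\pi s}$; every other pair satisfies $m_i+m_j\ge 10$, so its contribution carries an extra factor $e^{-10\pi s}$. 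Bounding the off-diagonal tail via $(m_i-m_j)^2\le(m_i+m_j)^2$ and geometric-series estimates on $\sum n\,e^{-n\pi s}$ and $\sum n^2\,e^{-n\pi s}$, entirely analogous to the estimates for $(II)$ and $(III)$ in the proof of Fact 1, then shows that the $(0,0)$ reserve dominates uniformly for $s\ge 1/4$.

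For the second claim ($s\theta_o'/\theta_o\ge \theta_o'(1)/\theta_o(1)$ on $s\le 1/4$), I would use the Poisson identity
\[
	\theta_o(s)=\frac{1}{2\sqrt{s}}\,\theta_4\!\left(\frac{1}{4s}\right)
\]
derived in the proof of Theorem \ref{omax}. Logarithmic differentiation gives
\[
	s\,\frac{\theta_o'(s)}{\theta_o(s)}=-\frac{1}{2}-u\,\frac{\theta_4'(u)}{\theta_4(u)},\qquad u:=\frac{1}{4s},
\]
and $s\le 1/4$ becomes $u\ge 1$. For $u\ge 1$ the alternating-series bounds $\theta_4(u)\ge 1-2e^{-\pi u}>0$ and $0<\theta_4'(u)\le 2\pi e^{-\pi u}$ yield $u\theta_4'(u)/\theta_4(u)\le 2\pi u e^{-\pi u}/(1-2e^{-\pi u})$, and a direct check shows this upper bound is decreasing in $u$ on $[1,\infty)$, hence is controlled by its value at $u=1$. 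Meanwhile, writing $\theta_o'(1)/\theta_o(1)=-\pi\sum_k(2k+1)^2 w_k/\sum_k w_k$ with $w_k=e^{-\pi(2k+1)^2}$ and using $\min_k(2k+1)^2=1$ gives the crude but sufficient bound $\theta_o'(1)/\theta_o(1)\le -\pi$. The claim therefore reduces to the elementary numerical inequality $-\tfrac{1}{2}-2\pi e^{-\pi}/(1-2e^{-\pi})\ge -\pi$, which holds by a wide margin.

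The main obstacle is the first claim: near $s=1/4$ both the $(0,0)$ diagonal reserve and the leading off-diagonal contribution $2(10-64\pi s)e^{-10\pi s}$ are non-negligible, so the geometric-series bookkeeping must be calibrated carefully enough to push the estimate uniformly down to the endpoint $s=1/4$.
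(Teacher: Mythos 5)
Your proposal is correct in substance but takes a genuinely different route from the paper in both halves, and the differences are worth recording. For the monotonicity on $s\ge 1/4$, the paper does not differentiate at all: it observes that $s\theta_o'(s)/\theta_o(s)$ is an exponentially small, controllable perturbation of $-\pi s$ (with error of order $\pi s\, e^{-8\pi s}$) and invokes standard tail estimates; you instead prove negativity of the derivative directly via the symmetrized double sum $\sum_{i,j}\bigl[(m_i+m_j)-\pi s\,(m_i-m_j)^2\bigr]e^{-\pi(m_i+m_j)s}>0$, in the spirit of the paper's Fact~1 for $\theta_3$. This is sound, and the $(0,0)$ reserve $2e^{-2\pi s}$ does dominate near $s=1/4$ (at $s=1/4$ it is about $0.42$ versus about $0.03$ for the $(0,1)$ pair), but be aware that the particular bookkeeping you sketch --- replacing $(m_i-m_j)^2$ by $(m_i+m_j)^2$ and then summing a cubic-weighted geometric series over $n=m_i+m_j\ge 10$ with naive multiplicity --- is lossy enough to fail at the endpoint; a sharper factorization such as $(m_i-m_j)^2 e^{-\pi(m_i+m_j)s}\le m_i^2 e^{-\pi m_i s}e^{-\pi s}$ for $m_i>m_j$ closes the gap comfortably. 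For the bound on $s\le 1/4$ your argument is actually cleaner and more rigorous than the paper's: the paper interprets numerator and denominator as Riemann sums to get $\lim_{s\to 0^+}s\theta_o'(s)/\theta_o(s)=-1/2$ and appeals to unimodality for the error control, whereas you transplant the small-$s$ regime to the large-argument regime of $\theta_4$ via the Poisson identity $\theta_o(s)=\tfrac{1}{2\sqrt{s}}\theta_4(1/(4s))$ (already derived in the proof of Theorem~\ref{omax}), obtaining the fully explicit bound $s\theta_o'(s)/\theta_o(s)\ge -\tfrac12-2\pi e^{-\pi}/(1-2e^{-\pi})>-\pi\ge\theta_o'(1)/\theta_o(1)$. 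This mirrors the ``backpropagation'' philosophy of Fact~2 in the proof of Theorem~\ref{log_convex}, recovers the paper's limit $-1/2$ as a byproduct, and replaces the Riemann-sum error analysis by elementary alternating-series inequalities; the paper's version, in exchange, makes the limiting constant $-1/2$ conceptually transparent.
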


	\begin{proof}[Proof of Theorem \ref{omax} assuming Lemma \ref{lemming}]
	We appeal again to the algebraic lemma: suppose that 
		$$	rs \frac{\theta_o'(rs)}{\theta_o(rs)} = \frac{r}{s} \frac{\theta_o'(r/s)}{\theta_o(r/s)}$$
	for some $r \geq 1/2$ and some $s \neq 1$. Since 
	$ (rs) (r/s) = r^2 \geq 1/4,$
	we know that at least one of the two terms $rs$ and $r/s$ is bigger than $1/2$.  If both terms are bigger than $1/4$, then the first part of Lemma \ref{lemming} implies
	that $rs = r/s$ and thus $s=1$. If one term (w.l.o.g. $rs$) is smaller than $1/4$, then $r/s = r^2/(rs) \geq (1/4)/(1/4) = 1$ and both parts of Lemma \ref{lemming} imply
	$$ rs \frac{\theta_o'(rs)}{\theta_o(rs)} \geq  \frac{\theta_o'(1)}{\theta_o(1)} >  \frac{r}{s} \frac{\theta_o'(r/s)}{\theta_o(r/s)}.$$
	\end{proof}

\begin{proof}[Proof of Lemma \ref{lemming}]
For $s$ bounded away from the origin, the quantity
	$$
	  s\frac{ \theta_{o}'(s)}{\theta_{o}(s)} = \frac{ - \sum_{k \in \mathbb{Z}}{ s \pi (2k+1)^2 e^{-\pi (2k+1)^2 s}}}{  \sum_{k \in \mathbb{Z}}{e^{-\pi (2k+1)^2 s}}}
	$$
	can be controlled because of the strong decay properties of all terms. Indeed, we have
	$$
		\left|s\frac{ \theta_{o}'(s)}{\theta_{o}(s)}  +  \pi s \right| \sim 9 \pi s e^{-8 \pi s}\qquad \mbox{as}~s \rightarrow \infty .
	$$
	Standard estimates allow to control the error on $\left\{s: s \geq 1/4\right\}$. The second half of the statement is slightly trickier because, many terms suddenly contribute non-trivially to the infinite sum. There is another algebraic simplification: the additional factor $s$ in front of $\theta_{o}'(s)/\theta_{o}(s)$ implies that both numerator and denominator can be interpreted as Riemann sums:
\begin{align*}
 \sum_{k \in \mathbb{Z}}{  \pi \left[ \sqrt{s} (2k+1)\right]^2e^{-\pi \left[ \sqrt{s}(2k+1) \right]^2}} &\sim \frac{1}{2\sqrt{s}} \int_{0}^{\infty}{\pi z^2 e^{-\pi z^2}dz} = \frac{1}{8\sqrt{s}}\\
  \sum_{k \in \mathbb{Z}}{e^{-\pi \left[ \sqrt{s}(2k+1) \right]^2}} &\sim \frac{1}{2\sqrt{s}} \int_{0}^{\infty}{e^{-\pi z^2}dz} = \frac{1}{4\sqrt{s}}.
\end{align*}
This implies, in particular, that
$$ \lim_{s \rightarrow 0^+}{ s\frac{ \theta_{o}'(s)}{\theta_{o}(s)} } = -\frac{1}{2}.$$
The error estimates that control the deviation from the Riemann sum and the integral are standard since $e^{-\pi z^2}$ is monotone and $\pi z^2 e^{-\pi z^2}$ is unimodal. 
\end{proof}

\section{Proof of Theorem \ref{oddlow}}
We want to show that for $r \geq 1$
$$
	\theta_4(rs) \theta_4\left(\frac{r}{s}\right) - 2 \theta_{o}(r s) \theta_{o}\left(\frac{r}{s}\right) \qquad \mbox{is maximal for}~s=1.
$$
This implies the statement about the lower frame bound in Theorem \ref{main} for odd redundancy.
\begin{proof}
This argument works directly by establishing bounds on both size and derivative of the function. Note that for $r > 0$ fixed we already know (Theorem \ref{alternativ} and Theorem \ref{omax}) that both $ \theta_4(rs) \theta_4(r/s)$ and $ \theta_{o}(r s) \theta_{o}(r/s)$ are maximal for $s=1$. In particular, there exists a critical point in $s=1$. Because of the definition as alternating sum, we immediately have
$$ \theta_4(x) = 1 - 2 e^{-\pi x} + 2 e^{- 4 \pi x}  + (- 2 e^{- 9 \pi x} + 2 e^{- 16 \pi x} ) + \dots \leq  1 - 2 e^{-\pi x} + 2 e^{- 4 \pi x}$$
and therefore
\begin{align*}
	\theta_4(rs) \theta_4(r/s) &\leq (1-2e^{-\pi r s}+ 2e^{-4 \pi r s})(1-2e^{-\pi r/s}+2e^{-4\pi r/s})\\
	\theta_{o}(rs) \theta_{o}(r/s) &\leq  \theta_{o}(r)^2 \sim 4 e^{- 2 \pi r}
\end{align*}
Note that for $r \geq 1$ and $1/3 \leq s \leq 3$ the expansion are fairly accurate (and could be made more precise by adding additional terms).
This chain of inequalities implies that the function cannot assume a global maximum for $s$ outside of the range $1/3 \leq s \leq 3$. We have approximately
$$ \theta_4(r)^2 \sim (1-2 e^{-\pi r})^2 \sim 1 - 4 e^{-\pi r}.$$
At the same time, we have for $s \leq 1/3$ that 
$$ \theta_4(rs) \leq \theta_4\left(\frac{r}{3}\right) \leq 1-2e^{-\pi \frac{r}{3}}+ 2e^{-4 \pi \frac{r}{3}}$$
and
$$ 2e^{-\pi \frac{r}{3}} -  2e^{-4 \pi \frac{r}{3}} \gg 4 e^{-\pi r} \geq 4 e^{-2 \pi r} \qquad \mbox{for}~r \geq 1.$$
The same reasoning works for $s \geq 3$ by using the same argument on the other term.
We can thus restrict ourselves to $1/3 \leq s \leq 3$ and show that the second derivative of the first term is many order of magnitudes bigger than the second derivative of the second term which will then imply the statement. Note that the proper first-order approximations are
\begin{align*}
	\theta_4( rs) \theta_4\left(\frac{r}{s}\right) &\sim 1 - 2 e^{-\pi r s}  - 2 e^{-\pi \frac{r}{s}} + 4 e^{-\pi \left(rs + \frac{r}{s}\right)} \\
	\theta_{o}( rs) \theta_{o}\left(\frac{r}{s}\right) &\sim 4 e^{-\pi \left(rs + \frac{r}{s}\right)}
\end{align*}
We see that the second function is merely the second-order correction of the first function and because we may assume $1/3 \leq s \leq 3$, easy local estimates suffice to establish the result.
We leave the details to the interested reader.
\end{proof}

\subsection*{Acknowledgment} The authors wish to thank Karlheinz Gr\"ochenig for many fruitful discussions on the topic. The authors wish to thank Thomas Strohmer for beneficial feedback and for pointing out a reference. The first author was supported by the Austrian Science Fund (FWF): [P26273-N25]. The second author was supported by a Yale Provost Travel Grant and INET Grant \#INO15-00038. This project was initiated at the Oberwolfach Workshop 1534 \textsc{Applied Harmonic Analysis and Sparse Approximation} and the authors are grateful to both the MFO and the Organizers for the enjoyable and productive week.

\bibliographystyle{plain}
\bibliography{diss}

\end{document}